\newcommand{\R}{\mathbb R}
\newcommand{\N}{\mathbb N}
\newcommand{\C}{\mathbb C}
\newenvironment{proof}{\noindent{\it Proof}\rm.}{\hfill $\Box$}
\newenvironment{proofof}[1]{\bigskip\noindent{\it Proof of~#1.}\rm}{\hfill $\Box$}
\newtheorem{theorem}{Theorem} [section]
\newtheorem{lemma}{Lemma} [section]
\newtheorem{corollary}{Corollary} [section]
\newtheorem{remark}{Remark}[section]
\begin{document}

\title{Universal inequalities for the eigenvalues of Laplace and 
Schr\"{o}dinger operators on submanifolds}
\author{Ahmad El Soufi$^1$ \and Evans M. Harrell II$^2$ \and Sa\"{\i}d Ilias$^1$}
\date{\small 
$^1$Universit\'e Fran{\c c}ois Rabelais de Tours, Laboratoire de Math\'ematiques
et Physique Th\'eorique, UMR-CNRS 6083, Parc de Grandmont, 37200
Tours, France\\
{\tt elsoufi@univ-tours.fr; ilias@univ-tours.fr}\\
$^2$School of Mathematics,
Georgia Institute of Technology,\\
Atlanta GA 30332-0160, USA\\
{\tt harrell@math.gatech.edu}}

\maketitle

\begin{abstract}
{\footnotesize
We establish inequalities for the eigenvalues of Schr\"{o}dinger operators on compact submanifolds (possibly with nonempty boundary) of Euclidean spaces, of spheres, and of real, complex and quaternionic projective spaces, which are related to inequalities  for the Laplacian on Euclidean domains due to
Payne, P\'olya, and Weinberger and to Yang, but which depend in an
explicit way on the mean curvature.  In later sections, we prove similar results for Schr\"{o}dinger operators on homogeneous Riemannian spaces and, more generally, on any Riemannian manifold that admits an eigenmap into a sphere,
as well as for the Kohn Laplacian on subdomains of the Heisenberg group.

Among the consequences of this analysis are an extension of Reilly's inequality, bounding any eigenvalue of the Laplacian in terms of the mean curvature, and spectral criteria for the immersibility of
manifolds in homogeneous spaces.


\bigskip\noindent
{\it Key words and phrases}. Spectrum, eigenvalue, Laplacian, Schr\"{o}dinger operator, 
Reilly inequality, Kohn Laplacian

\noindent
{2000 \it Mathematics Subject Classification}. {58J50; 58E11; 35P15}
}
\end{abstract}

\section{Introduction}

Universal eigenvalue inequalities date from the work of Payne, P\'olya, and Weinberger in the 1950's \cite{PPW}, who considered the Dirichlet problem for the Laplacian on a Euclidean domain.  In this and similar cases, the term ``universal'' applies to expressions involving only the eigenvalues of a class of operators, without reference to the details of any specific operator in the class.  Since that time the essentially purely algebraic arguments that lead to universal inequalities have been adapted in various ways for eigenvalues of differential operators on manifolds e.g., see \cite{AB,CY1,CY2,H,HM, Le, L,NZ,T, YY}).  For a review of universal eigenvalue inequalities, we refer to \cite{A,AH1}.)  In particular, Ashbaugh and Benguria discussed universal inequalities for Laplacians
on subdomains of hemispheres in \cite{AB}, and
Cheng and Yang have treated the case of Laplacians on minimal submanifolds of spheres \cite{CY1}.

When either the geometry is more complicated or a potential energy is introduced, analogous inequalities must contain appropriate modifications.  Our point of departure is a recent article \cite{H}, in which the eigenvalues of  Schr\"odinger operators on hypersurfaces  were  studied and some trace identities and sharp inequalities were presented, containing the mean curvature explicitly.  The goal of the present article is to further study the relation between the spectra of Laplacians or
Schr\"odinger operators and the local differential geometry of submanifolds of arbitrary codimension.  
The approach is based on an algebraic technique which allows us to unify and 
extend many results in the literature (see \cite{A,AH1,AH2,AH3,H,HM,Le,L,NZ,PPW,Y,YY} 
and  Remarks~\ref{rk}, \ref{rk3} \ref{rk4}).
There is an extension of the results of \cite{H} to the case of submanifolds of codimension greater than one, and because of the appearance of the mean curvature, we are able to generalize Reilly's inequality \cite{R,ElIl86,ElIl92,ElIl00} by bounding each eigenvalue of the Laplacian in terms of the mean curvature.
In addition we derive the modifications necessary when the domain is contained in a submanifold of spheres, projective spaces, and certain other types of spaces.  Finally,
we are able to obtain some universal inequalities in the rather different context of the Kohn Laplacian on subdomains of the Heisenberg group.

Let us point out the following phenomenon which appears as a particular case of our results in Section 2 : For any compact submanifold $M$ of a Euclidean space, the eigenvalues of the operator $$-\Delta +  \frac{|h|^{2}} 4,$$
where $h$ is the mean curvature vector field of $M$, satisfy exactly the same universal inequalities  of PPW, HP and Yang type, as those satisfied by the eigenvalues of the Dirichlet Laplacian of a Euclidean domain.  This result is to be compared with the fact that when we consider the Laplace operator $-\Delta$ all alone (that is without the geometric potential term), then any finite sequence of positive numbers can be realized as the beginning of the spectrum of $-\Delta$ on a compact submanifold of a Euclidean space with given topology (indeed, this is a direct consequence of  the well-known construction of Colin de Verdi{\`e}re \cite{CV} and the famous Nash-Moser isometric embedding theorem). This means that there exist no universal inequalities for the  eigenvalues of the Laplace operator on compact submanifolds. Roughly speaking, one can say that, while the spectral behavior of the Laplace operator on compact submanifolds is ``unforseable'', the spectral behavior of the operator $-\Delta +  \frac{|h|^{2}} 4$ is as rigid as the Dirichlet Laplacian on Euclidean domains. 

The existence of universal eigenvalue inequalities appears at first to run counter to the well-known construction of Colin de Verdi{\`e}re \cite{CV}, allowing one to specify an arbitrary finite 
number of eigenvalues of 
a Laplacian or a Schr\"odinger operator if one is free to choose the metric or 
the potential 
energy on a manifold.   
From that point of view, universal eigenvalue inequalities like the ones in this article
either imply bounds on the 
potential energy in relation to the mean curvature, or else
necessary conditions for the embeddability of the Colin de Verdi{\`e}re examples 
as submanifolds of 
Euclidean or other symmetric spaces.

Let $M^{n}$ be a compact Riemannian manifold of dimension $n$, possibly with nonempty boundary $\partial M$, and let $\Delta $ be the Laplace--Beltrami operator on $M$.  In the case where $\partial M \neq \emptyset$, Dirichlet boundary conditions apply (in the weak sense \cite{D}).
For any bounded real--valued potential $q $ on $M$, the Schr\"{o}dinger operator
$$H= - \Delta+q$$
has compact resolvent (see \cite[Theorem~IV.3.17]{K} and observe that a bounded $q$ is relatively compact
with respect to $\Delta$).
The spectrum of  $H$
consists of a nondecreasing, unbounded sequence of eigenvalues with finite multiplicities \cite{Cv,D}:
$$Spec(-\Delta+q)=\{\lambda_1  <  \lambda_2 \le \lambda_3\le
\cdots \le \lambda_i\le \cdots \}.$$
Notice that when $\partial M = \emptyset$ and $q=0$, the zero eigenvalue is indexed by 1, that is,
$\lambda_1=0$.  The $L^2$-normalized eigenfunctions will be denoted 
$\left\{u_i\right\}$, so that $H u_i = \lambda_i u_i$.

To avoid technicalities, we suppose throughout that $q$ is bounded, and that the mean curvature of the submanifolds under consideration is defined everywhere and bounded.  Extensions to a wider class of potentials and geometries allowing singularities would not be difficult.

\section {Submanifolds of $\R^m$}

In this section $M$ is either a closed Riemannian manifold or a bounded domain in a Riemannian manifold that can be immersed as a  submanifold of dimension $n$ of $\R^m$.  The main theorem
directly extends a result of \cite{H}, in which
part (I) descends ultimately from a result of H.~C.~Yang for Euclidean domains \cite{Y,HS,AH1,AH2}:

\begin {theorem} \label{PPW euclidean}
Let $X: M \longrightarrow {\R}^{m}$ be an isometric immersion. We denote by $h$ the mean curvature vector field of $X$ (i.e the trace of its second fundamental form). For any bounded potential $q$ on $M$, the spectrum of $H=-\Delta+q$
(with Dirichlet boundary conditions if $\partial M \neq \emptyset$)
must satisfy, $\forall k\ge1$,
\begin{enumerate}
\item[(I)] $\displaystyle{n \sum_{i=1}^{k}(\lambda_{k+1}-\lambda_{i})^{2} \le 4 \sum_{i=1}^{k}(\lambda_{k+1}-\lambda_{i})\left(\lambda_{i}+\delta_i \right)}$
\item[(II)]$\displaystyle{\left( 1+\frac{2}{n}\right) \frac{1}{k} 
\sum_{i=1}^{k}{\lambda_{i}}+ \frac{2}{n}
 \frac{1}{k} \sum_{i=1}^{k}{\delta_i}
-\sqrt{D_{nk}} \le \lambda_{k+1}}$
\item[\ \ \ \ \ ]$\ \ \ \ \ \ \ \ \ \ \ \ \
\displaystyle\le \left( 1+\frac{2}{n}\right) \frac{1}{k} \sum_{i=1}^{k}{\lambda_{i}}+ \frac{2}{n}
 \frac{1}{k} \sum_{i=1}^{k}{\delta_i}+\sqrt{D_{nk}},$
\end{enumerate}
where $u_i$ are the $L^2$--normalized eigenfunctions,
$\delta_i := \int_{M}\left({{|h|^{2}} \over 4}-q\right)u_{i}^{2}$, and
$$
{\rm (III)}\quad D_{nk} := \left({\left( 1+\frac{2}{n}\right) \frac{1}{k}  \sum_{1}^{k}{\lambda_{i}} +  \frac{2}{n}
 \frac{1}{k} \sum_{i=1}^{k}{\delta_i} }\right)^2 - \left( 1+\frac{4}{n}\right)  \frac{1}{k}\sum_{1}^{k}{\lambda_{i}^2} $$
 \hfill $\displaystyle - \frac{4}{n}
 \frac{1}{k}\sum_{i=1}^{k}{\lambda_{i} \delta_{i}}  \ge 0.
$
\end{theorem}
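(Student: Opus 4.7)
The plan is to deploy the commutator-based Yang machinery (the algebraic framework alluded to in the introduction, of Hook--Harrell--Stubbe--Ashbaugh--Hermi type) using the Euclidean coordinate functions $x_1,\ldots,x_m$ as test/multiplier operators. The abstract input is: for any bounded multiplication operator $G$ acting as $g\cdot$, one has the Yang-type inequality
\begin{equation*}
\sum_{i=1}^{k}(\lambda_{k+1}-\lambda_i)^{2}\,\langle [G,[H,G]]u_i,u_i\rangle \;\le\; 2\sum_{i=1}^{k}(\lambda_{k+1}-\lambda_i)\,\|[H,G]u_i\|^{2},
\end{equation*}
proved by taking $\varphi_i=(G-\langle Gu_i,u_i\rangle)u_i$ minus its projection onto the span of $u_1,\ldots,u_k$ as trial functions for $\lambda_{k+1}$, then manipulating the resulting quadratic form identities. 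I will summon this lemma (in whatever form the authors have set up earlier in the paper) and apply it coordinate-by-coordinate with $G=x_\alpha$, then sum on $\alpha=1,\ldots,m$.

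The next step is to evaluate the two geometric sums. For the double commutator, a direct computation with $H=-\Delta+q$ gives $[x_\alpha,[H,x_\alpha]]=2|\nabla^{M}x_\alpha|^{2}$ as a multiplication operator; summing on $\alpha$ and using that $X$ is an isometric immersion (so $\sum_\alpha|\nabla^{M}x_\alpha|^{2}=n$ pointwise) yields $\sum_\alpha\langle[x_\alpha,[H,x_\alpha]]u_i,u_i\rangle=2n$. For the right-hand side, $[H,x_\alpha]u_i=-2\nabla^{M}x_\alpha\cdot\nabla u_i-(\Delta x_\alpha)u_i$; squaring and summing over $\alpha$ gives three pieces. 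The gradient piece contributes $4|\nabla u_i|^{2}$ by the same isometry identity. The Laplacian piece contributes $|h|^{2}u_i^{2}$ via the classical formula $\Delta^{M}X=h$ (so $\Delta x_\alpha=h_\alpha$ and $\sum_\alpha h_\alpha^{2}=|h|^{2}$). The cross term integrates to $2\int u_i\,\nabla u_i\cdot\sum_\alpha h_\alpha\nabla^{M}x_\alpha=0$ because $\sum_\alpha h_\alpha\nabla^{M}x_\alpha$ is the tangential projection of the normal vector $h$, which vanishes. Finally, $\int|\nabla u_i|^{2}=\lambda_i-\int q u_i^{2}$, so $\sum_\alpha\|[H,x_\alpha]u_i\|^{2}=4\lambda_i+\int(|h|^{2}-4q)u_i^{2}=4(\lambda_i+\delta_i)$.

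Substituting these two evaluations into the abstract inequality gives exactly
\begin{equation*}
2n\sum_{i=1}^{k}(\lambda_{k+1}-\lambda_i)^{2}\;\le\;8\sum_{i=1}^{k}(\lambda_{k+1}-\lambda_i)(\lambda_i+\delta_i),
\end{equation*}
which is statement (I) after dividing by $2$. Statements (II) and (III) are then a purely algebraic consequence: expanding (I) produces a quadratic polynomial $P(\lambda_{k+1})\le 0$ in the single variable $\lambda_{k+1}$, with leading coefficient $nk>0$. After dividing by $nk$ the coefficients match $-2\bigl[(1+\tfrac{2}{n})\tfrac{1}{k}\sum\lambda_i+\tfrac{2}{n}\tfrac{1}{k}\sum\delta_i\bigr]$ and $(1+\tfrac{4}{n})\tfrac{1}{k}\sum\lambda_i^{2}+\tfrac{4}{n}\tfrac{1}{k}\sum\lambda_i\delta_i$, so the solvability $\lambda_{k+1}\in[\text{roots}]$ yields (II) with discriminant exactly $D_{nk}$, and the nonnegativity of the discriminant is (III).

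The most delicate step is the clean separation of the potential $q$ from the geometric potential $|h|^{2}/4$ inside the quantity $\delta_i$; everything hinges on the normality of the mean curvature vector making the cross term vanish, which is the structural reason the same universal inequality holds with the single correction $\lambda_i\mapsto\lambda_i+\delta_i$. The rest is bookkeeping, modulo confirming that the abstract commutator lemma has been stated (or can be stated) in the generality required for Dirichlet boundary conditions, where one uses the fact that multiplication by the smooth bounded function $x_\alpha$ preserves the form domain $H^{1}_{0}(M)$.
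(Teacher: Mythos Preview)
Your proof is correct and follows essentially the same route as the paper. The only cosmetic difference is that you invoke the abstract Yang inequality in its double-commutator form $\sum(\lambda_{k+1}-\lambda_i)^2\langle[G,[H,G]]u_i,u_i\rangle\le 2\sum(\lambda_{k+1}-\lambda_i)\|[H,G]u_i\|^2$, whereas the paper states it as $\sum(\lambda_{k+1}-\lambda_i)^2\langle[H,G]u_i,Gu_i\rangle\le \sum(\lambda_{k+1}-\lambda_i)\|[H,G]u_i\|^2$; these are equivalent since $\langle[G,[H,G]]u_i,u_i\rangle=2\langle[H,G]u_i,Gu_i\rangle$ for self-adjoint $H,G$, and the subsequent geometric computations (the isometry identity $\sum_\alpha|\nabla X_\alpha|^2=n$, the Takahashi relation $\Delta X=h$, and the vanishing of the cross term by normality of $h$) and the quadratic-formula extraction of (II)--(III) are identical.
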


Theorem \ref{PPW euclidean} can be simplified to eliminate all dependence on
$u_i$ with elementary estimates such as

$$\inf{\left({{|h|^{2}} \over 4}-q\right)} \le \delta_i \le \sup{\left({{|h|^{2}} \over 4}-q\right)}. \eqno{(2.1)}$$
Thus:

\begin{corollary} \label{coro1}
Under the circumstances of Theorem \ref{PPW euclidean}, $\forall k\ge 1$,
\begin{enumerate}
\item[(I a)] $\displaystyle{n \sum_{i=1}^{k}(\lambda_{k+1}-\lambda_{i})^{2} \le 4 \sum_{i=1}^{k}(\lambda_{k+1}-\lambda_{i})\left(\lambda_{i}+\delta\right)}$
\item[(II a)]$\displaystyle{\lambda_{k+1} \le \left(1+\frac{4}{n}\right) \frac{1}{k} \sum_{i=1}^{k}{\lambda_{i}}+\frac{4 \delta}{n}.}$
\end{enumerate}
where $\delta:=\sup{\left({{|h|^{2}} \over 4}-q\right)}  $.
\end{corollary}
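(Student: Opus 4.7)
The plan is to deduce both parts directly from Theorem~\ref{PPW euclidean} combined with the pointwise bound~(2.1). Part (I\,a) is essentially immediate: since the eigenvalues are nondecreasing, $\lambda_{k+1}-\lambda_i \ge 0$ for every $i\le k$, so replacing each $\delta_i$ on the right-hand side of Theorem~\ref{PPW euclidean}(I) by the larger constant $\delta = \sup(|h|^2/4-q)$ can only enlarge that sum. No further manipulation is needed.

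For (II\,a) I would not use part (II) of the theorem, whose coefficient $1+2/n$ is not sharp enough; instead I would extract (II\,a) directly from (I\,a). Setting $S:=\sum_{i=1}^{k}\lambda_i$ and $T:=\sum_{i=1}^{k}\lambda_i^{2}$ and expanding the squares, (I\,a) becomes the quadratic inequality
\begin{equation*}
nk\,\lambda_{k+1}^{2}-\bigl(2(n+2)S+4k\delta\bigr)\lambda_{k+1}+(n+4)T+4\delta S \;\le\; 0.
\end{equation*}
Since the leading coefficient is positive, the quadratic formula forces
\begin{equation*}
\lambda_{k+1} \;\le\; \frac{(n+2)S}{nk}+\frac{2\delta}{n}+\frac{\sqrt{D}}{2nk},
\end{equation*}
with $D := \bigl(2(n+2)S+4k\delta\bigr)^{2}-4nk\bigl((n+4)T+4\delta S\bigr)$.

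The only nontrivial step is bounding $D$. I would invoke the Cauchy--Schwarz inequality in the form $kT\ge S^{2}$, which inside $D$ produces $-4nk(n+4)T \le -4n(n+4)S^{2}$. The algebraic identity $4(n+2)^{2}-4n(n+4)=16$ then causes the $S^{2}$-contribution to collapse, and the remaining pieces reassemble into a perfect square, giving $D \le (4S+4k\delta)^{2}$. Substituting $\sqrt{D}\le 4S+4k\delta$ into the previous display yields exactly $\lambda_{k+1}\le (1+4/n)\,S/k + 4\delta/n$, which is (II\,a). This algebraic collapse---responsible for the sharp PPW--Yang constant $1+4/n$---is the only checkpoint worth verifying carefully; everything else is routine expansion.
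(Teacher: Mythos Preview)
Your proof is correct and follows essentially the same route as the paper: both arguments hinge on the quadratic inequality in $\lambda_{k+1}$ together with the Cauchy--Schwarz bound $kT\ge S^{2}$, which collapses the discriminant to the perfect square $(4S+4k\delta)^{2}$. The only organizational difference is that you replace $\delta_i$ by $\delta$ at the level of inequality~(I) and then form the quadratic, whereas the paper first passes to~(II) and its discriminant $D_{nk}$ and performs the replacement there; your order is slightly cleaner, since the monotonicity $\lambda_{k+1}-\lambda_i\ge 0$ makes the replacement in~(I) transparently valid, while in the paper's version one must check that replacing $\delta_i$ by $\delta$ really bounds $D_{nk}$ from above despite $\delta_i$ appearing with both signs. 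Your remark that part~(II) is ``not sharp enough'' is a small misreading---the paper does obtain~(II\,a) from~(II), precisely because the $\sqrt{D_{nk}}$ term supplies the missing $\tfrac{2}{n}\cdot\tfrac{1}{k}\sum\lambda_i$ after the same Cauchy--Schwarz collapse you use.
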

\par
\noindent
Corollary~\ref{coro1}, proved below, can be restated as a criterion for the immersibility of a manifold in $\R^m$:

\begin{corollary} \label{immers1}
Suppose that $\left\{\lambda_{i}\right\}$ are the eigenvalues of the Laplace--Beltrami operator on an abstract compact Riemannian manifold $M$ of dimension $n$.
If $M$ is isometrically immersed in $\R^m$, then the mean curvature satisfies
$$\|h\|_{\infty}^2 \geq n \lambda_{k+1} - \frac{\left(n+4\right)}{k} \sum_{i=1}^{k}{\lambda_{i}}\eqno{(2.2)}$$
for each $k$.
\end{corollary}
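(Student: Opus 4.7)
The plan is to derive Corollary~\ref{immers1} as a direct consequence of part (II a) of Corollary~\ref{coro1}, applied with the potential $q\equiv 0$. Since the hypothesis refers to the Laplace--Beltrami operator alone, it is natural to set $q=0$, so that the Schr\"odinger operator $H=-\Delta+q$ reduces to $-\Delta$ and the eigenvalues appearing in Corollary~\ref{coro1} coincide with the given $\lambda_i$.

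First I would specialize the quantity $\delta = \sup\bigl(|h|^{2}/4 - q\bigr)$ to the case $q=0$, which gives $\delta = \|h\|_{\infty}^{2}/4$. Substituting this into inequality (II a) yields
\[
\lambda_{k+1} \le \left(1+\frac{4}{n}\right)\frac{1}{k}\sum_{i=1}^{k}\lambda_{i} + \frac{1}{n}\|h\|_{\infty}^{2}.
\]
Next I would multiply through by $n$ and isolate $\|h\|_{\infty}^{2}$ on the right, producing exactly the asserted inequality~(2.2). Since the immersion hypothesis is precisely what is needed to invoke Corollary~\ref{coro1}, this completes the derivation.

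There is no real obstacle: the content of the corollary is essentially a rearrangement of (II a), with the geometric interpretation that any compact Riemannian $n$-manifold isometrically immersed in $\R^m$ must have mean curvature large enough in sup-norm to accommodate the spectral gap on the right-hand side. The only point worth emphasizing in the write-up is that (2.2) is genuinely an obstruction: for each $k$, the right-hand side is determined intrinsically by the spectrum, while the left-hand side is an extrinsic quantity depending on the immersion, so if the spectral gap is too large no isometric immersion into $\R^m$ (of any codimension) can exist.
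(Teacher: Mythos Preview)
Your proposal is correct and matches the paper's own treatment: the paper presents Corollary~\ref{immers1} simply as a restatement of Corollary~\ref{coro1}(II~a) with $q=0$, which is exactly the specialization and rearrangement you describe.
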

\par\noindent
Corollary~\ref{immers1} is representative of a large family of necessary conditions for immersibility in terms of the eigenvalues of  Laplace--Beltrami and Schr\"odinger operators on $M$, which will not be presented in detail in this article.  (See \cite{H} for various sum rules on which such constraints can be based.)

\begin{proofof}{Theorem~\ref{PPW euclidean}}
For a smooth function $G$ on $M$, we will denote by $G$ the multiplication operator naturally associated with $G$. To prove Theorem~\ref{PPW euclidean} we first need the following lemma involving the commutator of $H$ and $G$, $\left[H, G\right] := H G - G H$.

\begin{lemma}\label{commutation}
For any smooth $G$ and any positive integer $k$ one has
$$\sum_{i=1}^{k}(\lambda_{k+1}-\lambda_{i})^{2} \langle[H,G]u_{i},Gu_{i}\rangle_{L^{2}} \le \sum_{i=1}^{k}(\lambda_{k+1}-\lambda_{i})\left\|[H,G]u_{i}\right\|_{L^2}^{2}\eqno{(2.3)}$$
\end{lemma}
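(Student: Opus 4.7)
The idea is to expand $Gu_i$ in the orthonormal basis provided by the eigenfunctions of $H$ and reduce (2.3) to a purely algebraic identity on the spectral coefficients. Since $H=-\Delta+q$ has compact resolvent, its $L^{2}$-normalized eigenfunctions $\{u_j\}_{j\ge 1}$ form a complete orthonormal system of $L^{2}(M)$, so for each fixed $i$ one may write $Gu_i = \sum_{j\ge 1} a_{ij}\,u_j$ with $a_{ij}:=\langle Gu_i, u_j\rangle_{L^{2}}$. The self-adjointness of multiplication by the real-valued function $G$ yields the crucial symmetry $a_{ij}=a_{ji}$.

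Applying $H$ termwise to this expansion and using $Hu_j=\lambda_j u_j$ gives $[H,G]u_i=\sum_j(\lambda_j-\lambda_i)\,a_{ij}\,u_j$, so Parseval yields
$$\langle [H,G]u_i,Gu_i\rangle_{L^{2}}=\sum_{j}(\lambda_j-\lambda_i)\,a_{ij}^{2}, \qquad \|[H,G]u_i\|_{L^{2}}^{2}=\sum_{j}(\lambda_j-\lambda_i)^{2}\,a_{ij}^{2}.$$
Substituting into (2.3) and collecting terms, the difference between the right and left sides of the claimed inequality becomes
$$\sum_{i=1}^{k}\sum_{j=1}^{\infty} a_{ij}^{2}\,(\lambda_{k+1}-\lambda_i)(\lambda_j-\lambda_i)(\lambda_j-\lambda_{k+1}).$$

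The last step is to show this double sum is nonnegative, which I would do by splitting the $j$-range at $j=k$. On the square $\{1\le i,j\le k\}$ the cubic polynomial $(\lambda_{k+1}-\lambda_i)(\lambda_j-\lambda_i)(\lambda_j-\lambda_{k+1})$ is antisymmetric under the swap $i\leftrightarrow j$; combined with $a_{ij}^{2}=a_{ji}^{2}$, the contributions cancel in pairs. On the remaining range $\{i\le k<j\}$, the ordering $\lambda_i\le\lambda_{k+1}\le\lambda_j$ makes all three factors nonnegative, so each summand is nonnegative and (2.3) follows. No Rayleigh--Ritz input is required for this algebraic lemma (that will enter later when (2.3) is combined with a concrete choice of $G$ to prove Theorem~\ref{PPW euclidean}); the only delicate point is recognizing the $(i,j)\leftrightarrow(j,i)$ antisymmetry of the cubic factor on the low-index square, which is precisely what forces the cancellation there and leaves only manifestly nonnegative terms.
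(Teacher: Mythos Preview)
Your argument is correct: the spectral expansion $Gu_i=\sum_j a_{ij}u_j$ is legitimate because $Gu_i$ lies in the domain of $H$ (it is smooth and, when $\partial M\neq\emptyset$, inherits the Dirichlet condition from $u_i$), the Parseval identities for $\langle[H,G]u_i,Gu_i\rangle$ and $\|[H,G]u_i\|^2$ are exactly as you wrote, and the antisymmetry $f(i,j)=-f(j,i)$ of the cubic factor together with $a_{ij}^2=a_{ji}^2$ indeed kills the $1\le i,j\le k$ block, leaving only the manifestly nonnegative tail $j\ge k+1$.

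Note, however, that the paper does not actually prove Lemma~\ref{commutation}: it simply cites \cite[Theorem~5]{HS} and \cite[Theorem~2.1]{AH1}. Your spectral--decomposition argument is precisely the standard proof found in those references (particularly the Ashbaugh--Hermi formulation), so in that sense you have reproduced the ``paper's'' proof rather than found an alternative one.
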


This lemma dates from \cite[Theorem~5]{HS}, and in this form appears in 
\cite[Theorem~2.1]{AH1}.
Variants can be found in \cite[Corollary~4.3]{H} and \cite[Corollary~2.8]{LP}.

Now, let $X_1, \dots, X_m$ be the components of the immersion $X$. A straightforward calculation gives
$$[H,X_{\alpha}]u_{i}=[-\Delta,X_{\alpha}] u_{i}=(-\Delta X_{\alpha})u_{i}-2\nabla X_{\alpha}\cdot\nabla u_{i}.$$
It follows by integrating by parts that
$$ \langle[H,X_{\alpha}]u_{i},X_{\alpha}u_{i}\rangle_{L^2}=\int_{M} |\nabla X_{\alpha}|^{2} u_{i}^{2}.$$
Thus $$ \sum_{\alpha} \langle[H,X_{\alpha}]u_{i},X_{\alpha}u_{i}\rangle_{L^2}= \sum_{\alpha}\int_{M} |\nabla X_{\alpha}|^{2}u_{i}^{2}=n \int_M u_{i}^{2}=n.$$
On the other hand, we have
$$\left\|[H,X_{\alpha}]u_{i}\right\|_{L^2}^{2}= \int_{M} \left((-\Delta X_{\alpha})u_{i}-2\nabla X_{\alpha}\cdot\nabla u_{i})\right)^{2}.$$
Since $X$ is an isometric immersion, it follows that 
$h=(\Delta X_1,\dots,\Delta X_m)$, 
$\sum_{\alpha}\left(\nabla{X_{\alpha}}\cdot\nabla u_{i}\right)^{2}=|\nabla u_{i}|^{2}$ and $\sum_{\alpha}(-\Delta X_{\alpha})u_{i}\nabla X_{\alpha}\cdot\nabla u_{i} = h\cdot\nabla u_i^2=0$.
Using all these facts,  we get
$$\sum_{\alpha}\left\|[H,X_{\alpha}]u_{i}\right\|_{L^2}^{2}= \int _{M} |h|^{2}u_{i}^{2} + 4 \int_{M} |\nabla u_{i}|^{2},\eqno{(2.4)}$$
as in \cite{H}.  Then
\begin{eqnarray}
\nonumber{} \int_{M} |\nabla u_{i}|^{2} &=&  \int_{M} u_{i}(-\Delta+q)u_{i}- \int_{M} qu_{i}^{2}\\
   \nonumber{} &=& \left(\lambda_{i}-\int_{M}qu_{i}^{2}\right).
\end{eqnarray}
Using Lemma \ref{commutation} we obtain
$$n \sum_{i=1}^{k} (\lambda_{k+1}-\lambda_{i})^{2} \le \sum_{i=1}^{k} (\lambda_{k+1}-\lambda_{i})\left(\int_{M} (|h|^{2}-4q)u_{i}^{2} + 4\lambda_{i}\right)$$
which proves assertion (I) of Theorem~\ref{PPW euclidean}.
\par
\noindent
From assertion (I) we get a quadratic inequality in the variable $\lambda_{k+1}$:
\begin{align*}
k \lambda_{k+1}^2 &-  \lambda_{k+1} \left(\left( 2+\frac{4}{n}\right)  \sum_{i=1}^{k}{\lambda_{i}}
+\frac{4}{n} \sum_{i=1}^{k}{\delta_{i}}\right)\\
& + \left( 1+\frac{4}{n}\right) \sum_{i=1}^{k}{\lambda_{i}^2}
+ \frac{4}{n} \sum_{i=1}^{k}{\lambda_{i} \delta_{i}} \le 0\tag{2.5}\end{align*}
The roots of this quadratic polynomial are the bounds in (II).  The existence and reality of
$\lambda_{k+1}$ imply statement (III).
\end{proofof}

\begin{proofof}{Corollary~\ref{coro1}}
To derive (II~a) from Theorem~\ref{PPW euclidean}(II), it is simply necessary to replace $\delta_i$ by $\delta$, and to note that
the quantity $D_{nk}$
is bounded above by
$$\left(\left( 1+\frac{2}{n}\right) \frac{1}{k} \sum_{i=1}^{k}{\lambda_{i}} + \frac{2 \delta}{ n}\right)^2 -  \left( 1+\frac{4}{n}\right) \frac{1}{k} \sum_{i=1}^{k}{\lambda_{i}^2} - \frac{4 \delta}{n} \frac{1}{k} \sum_{i=1}^{k}{\lambda_{i}},$$
which, since $\left( \sum_{i=1}^{k}{\lambda_{i}} \right) \le k \sum_{i=1}^{k}{\lambda_{i}^2} $, 
implies that
$$D_{nk} \le \left(\frac{2}{n} \frac{1}{k} \sum_{i=1}^{k}{\lambda_{i}}\right)^2 +  \left(\frac{2 \delta}{n}\right)^2 +
\frac{8 \delta}{n^2} \frac{1}{k}\sum_{i=1}^{k}{\lambda_{i}} = \left({\frac{2}{n} \frac{1}{k} \sum_{i=1}^{k}{\lambda_{i}} + \frac{\delta}{2n}}\right)^2,$$
with which the upper bound in (II) reduces to the right member of (II~a).
\end{proofof}

\medskip

We observe next
that Theorem~\ref{PPW euclidean}  enables us to recover Reilly's inequality
for $\lambda_2$ of the Laplace--Beltrami operator on closed submanifolds \cite{ElIl92,R}. Indeed, applying (I) with $k=1$, $\lambda_1=0$ and $u_1=V^{-\frac 1 2}$, where $V$ is the volume of $M$, we get
$$ \lambda_2\le \frac 4 n \ \delta_1=\frac 1 {nV}\int_M |h|^2\le \frac 1 n \left\|h\right\|_{\infty}^{2}.$$
Moreover, Theorem~\ref{PPW euclidean} allows extensions of Reilly's inequality to higher order eigenvalues. For example,  the following corollary can be derived easily from Corollary~\ref{coro1}(II~a) by induction on $k$.
\begin{corollary} \label{Reilly}
Under the circumstances of Theorem~\ref{PPW euclidean}, $\forall k\ge 2$,
$$\lambda_{k} \le \left(\frac 4 n +1\right)^{k-1} \lambda_1 +C_{R}(n,k) \left\|h\right\|_{\infty}^{2},$$
where $C_R(n,k) = \frac 1 4 \left((\frac 4 n +1)^{k-1} -1 \right)$.
In particular, when $M$ is closed and $q=0$,
$$\lambda_{k} \le C_{R}(n,k) \left\|h\right\|_{\infty}^{2}.\eqno{(2.6)}$$

\end{corollary}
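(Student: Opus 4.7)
The plan is to derive the bound by induction on $k$, using Corollary~\ref{coro1}(II~a) as a one-step recursion and the elementary estimate that $\delta := \sup\left(|h|^{2}/4 - q\right) \le \|h\|_\infty^{2}/4$ when $q \ge 0$ (in particular in the case $q=0$ that the statement singles out). Writing $a := 1 + 4/n$, we have $C_R(n,k) = (a^{k-1}-1)/4$, so the target reads $\lambda_k \le a^{k-1}\lambda_1 + \tfrac{1}{4}(a^{k-1}-1)\|h\|_\infty^2$.

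The first step I would take is to weaken (II~a) into a clean linear recursion. Since the eigenvalues are nondecreasing, the arithmetic mean $\frac{1}{k}\sum_{i=1}^{k}\lambda_i$ is bounded above by $\lambda_k$, and combining this with $4\delta/n \le \|h\|_\infty^2/n$ yields
$$\lambda_{k+1} \le a\,\lambda_k + \frac{\|h\|_\infty^2}{n}, \qquad k \ge 1.$$
The second step is to iterate this inequality from $k=1$. Setting $b := \|h\|_\infty^2/n$, the induction gives $\lambda_k \le a^{k-1}\lambda_1 + b\sum_{j=0}^{k-2} a^j = a^{k-1}\lambda_1 + b(a^{k-1}-1)/(a-1)$, and since $a-1 = 4/n$ the quotient $b/(a-1)$ equals $\|h\|_\infty^2/4$. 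This is exactly the claimed bound. The ``in particular'' statement then drops out by inserting $\lambda_1 = 0$, which holds when $M$ is closed and $q=0$.

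There is no serious obstacle; the argument is essentially a geometric series driven by the one-step estimate (II~a), and all the structure is already packaged there. The only subtle point is justifying $\delta \le \|h\|_\infty^2/4$, which is why the crispest form of the conclusion is stated for $q=0$: for sign-indefinite $q$ the same strategy would still work, but one would have to absorb an additional $\|\max(-q,0)\|_\infty$ contribution into the constant on the right, rather than obtaining a bound purely in terms of $\lambda_1$ and $\|h\|_\infty$.
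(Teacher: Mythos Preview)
Your argument is correct and is exactly the approach the paper has in mind: the paper merely says the corollary ``can be derived easily from Corollary~\ref{coro1}(II~a) by induction on $k$,'' and your weakening $\tfrac{1}{k}\sum_{i\le k}\lambda_i\le\lambda_k$ followed by summing the geometric series is precisely that induction written out. Your remark that the step $\delta\le\|h\|_\infty^2/4$ needs $q\ge 0$ (or at least $q\ge(|h|^2-\|h\|_\infty^2)/4$) is a valid observation about a hypothesis the paper leaves implicit in the general form of the bound.
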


\par\noindent
The explicit value for the generalized Reilly constant $C_{R}(n,k)$ given in this 
corollary  is likely  far from optimal.
We regard the
sharp value of $C_{R}(n,k)$ as an interesting open problem. 
In the case of a minimally embedded submanifold of a sphere,
Cheng and Yang state a bound on  $\lambda_{k}$ (\cite{CY1}, eq. (1.23)) that scales
like $k^{2 \over n}$ as in the Weyl law.  We conjecture
that $C_{R}(n,k)$ is sharply bounded by a constant times $ k^{2 \over n}$ when $q = 0$ and that when $q \ne 0, C_{R}(n,k)$ is
correspondingly bounded by a semiclassical expression, as is the case for
Schr\"odinger operators on flat spaces.  (See, for instance,
\cite{ThIII}, section~3.5 and \cite{Lieb}, part III.)

In \cite{H} it was argued that simplifications
and optimal inequalities are obtained in some circumstances where
${M}$ is a hypersurface and the potential
$q$ depends quadratically on curvature, a circumstance that arises naturally in the physics of thin structures (\cite{ExSe,ExHaLo} and references therein).  In this
spirit we close the section with some
remarks for Schr\"odinger operators $H_g := - \Delta + g |h|^2,$ for a real parameter $g$.  As was already observed in \cite{H}, in view of (2.1), simplifications occur
when $g = {1 \over 4}$, rendering the quantities $\delta$ and $\delta_j$ given above zero.

\begin{corollary} \label{optimal}
Assume $M$ is closed, $|h|$ is bounded, and $H$ is of the form
$H_g$, where $g$ is an arbitrary real number.
The inequalities {\rm (I)}, {\rm (II)}, and {\rm (III)},  in Theorem~\ref{PPW euclidean}
are saturated (i.e., equalities) for all $k$  
if $M$ is a sphere.
\end{corollary}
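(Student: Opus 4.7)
The plan is to show that each inequality in the derivation of Theorem~\ref{PPW euclidean} collapses to an equality when $M$ is the round sphere $S^{n}$ isometrically embedded as a standard sphere of some radius $r$ in $\mathbb{R}^{n+1}\subset\mathbb{R}^{m}$; the pivotal step is verifying that Lemma~\ref{commutation} itself is an identity when $G$ is taken to be any ambient coordinate $X_{\alpha}$.

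First, I record the simplifications specific to the round sphere: the mean curvature satisfies $|h|^{2}=n^{2}/r^{2}$ everywhere, so $H_{g}=-\Delta+gn^{2}/r^{2}$ is a constant shift of $-\Delta$; its eigenfunctions are the ordinary spherical harmonics, and $\delta_{i}=(1/4-g)\,n^{2}/r^{2}$ is independent of $i$. In particular $\lambda_{i}+\delta_{i}=\mu_{i}+n^{2}/(4r^{2})$, where $\mu_{i}$ is the $i$th Laplace--Beltrami eigenvalue of the round sphere.

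The central step is the equality analysis of Lemma~\ref{commutation} with $G=X_{\alpha}$. Each coordinate $X_{\alpha}$ is itself a first-order spherical harmonic ($-\Delta X_{\alpha}=(n/r^{2})X_{\alpha}$), and by the standard recursion multiplication by $X_{\alpha}$ maps the eigenspace $E_{\ell}$ of degree-$\ell$ spherical harmonics into $E_{\ell-1}\oplus E_{\ell+1}$. Expanding $X_{\alpha}u_{i}=\sum_{j}c_{ij}u_{j}$ with $c_{ij}=\langle X_{\alpha}u_{i},u_{j}\rangle$ and using $[H,X_{\alpha}]u_{i}=\sum_{j}c_{ij}(\lambda_{j}-\lambda_{i})u_{j}$, the difference between the two sides of (2.3) reduces to
\[
\sum_{i=1}^{k}\sum_{j\ge 1}(\lambda_{k+1}-\lambda_{i})(\lambda_{k+1}-\lambda_{j})(\lambda_{j}-\lambda_{i})\,|c_{ij}|^{2}.
\]
The $j\le k$ contributions cancel pairwise under the swap $i\leftrightarrow j$ (since $c_{ij}=c_{ji}$). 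For the remaining $j>k$ terms, if $\lambda_{i}$ or $\lambda_{j}$ equals $\lambda_{k+1}$ the corresponding factor in the summand vanishes, while if $\lambda_{i}<\lambda_{k+1}<\lambda_{j}$ then the harmonic degrees of $u_{i}$ and $u_{j}$ lie strictly on opposite sides of the degree $L^{*}$ of $\lambda_{k+1}$, hence differ by at least two, and the degree-shift rule forces $c_{ij}=0$. Thus the defect vanishes and (2.3) is an identity for each $\alpha$.

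Summing these identities over $\alpha$ and invoking (2.4) together with $\int|\nabla u_{i}|^{2}=\lambda_{i}-\int qu_{i}^{2}$ converts (I) into an equality. The quadratic (2.5) in $\lambda_{k+1}$ therefore vanishes at $\lambda_{k+1}$, which places $\lambda_{k+1}$ at one of the two roots displayed as the bounds in (II); the discriminant condition (III), $D_{nk}\ge 0$, is then the reality condition for these roots and holds automatically. The main obstacle is the bookkeeping in the case when $k$ does not complete a full spherical-harmonic multiplet, so that $\lambda_{k+1}=\lambda_{k}$: there the requisite cancellation in the defect comes from a vanishing eigenvalue difference rather than from the degree-shift rule, and the two cases must be tracked carefully before the algebra of Theorem~\ref{PPW euclidean} can be pushed through unchanged.
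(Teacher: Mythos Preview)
Your argument is correct, and it takes a genuinely different and more conceptual route than the paper's own proof. The paper verifies equality in (I) by a direct brute-force computation: it writes down the explicit eigenvalues $\ell(\ell+n-1)$ and multiplicities $\mu_{n,\ell}$ of the round sphere, notes that it suffices to check the values of $k$ lying at a spectral gap, substitutes, and then simplifies the resulting combinatorial sum to zero (with the aid of Mathematica). Equality for general $g$ is then deduced by observing that a constant shift of the spectrum leaves both sides of (I) unchanged.

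You instead locate the equality one level deeper, in Lemma~\ref{commutation} itself, by exploiting the selection rule that multiplication by a linear coordinate $X_{\alpha}$ carries $E_{\ell}$ into $E_{\ell-1}\oplus E_{\ell+1}$. Your case analysis of the defect is sound: the $j\le k$ part antisymmetrizes to zero under $i\leftrightarrow j$, and for $j>k$ every surviving term has either a vanishing eigenvalue factor or $|\ell_{j}-\ell_{i}|\ge 2$, forcing $c_{ij}=0$. This yields equality in (2.3) for each $\alpha$ separately, hence in (I) after summation, and then (II), (III) follow exactly as you say. The payoff of your approach is that it explains \emph{why} the sphere saturates the inequality (the commutator $[H,X_{\alpha}]$ has a two-term spectral spread), requires no computer algebra, and handles the mid-multiplet case $\lambda_{k+1}=\lambda_{k}$ uniformly rather than by the paper's separate reduction to gap values of $k$. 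The paper's computation, on the other hand, is self-contained and does not invoke the harmonic-analytic fact about the product decomposition.
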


\begin{proof}
We begin with the case of the Laplacian, $g=0$, for which the eigenvalues of the standard sphere
$\mathbb{S}^{n}$ are known
\cite{M} to be $\left\{\ell (\ell + n - 1)\right\}$, $\ell = 0, 1, \dots$, with multiplicities 1 for 
$\ell=0$; $n+1$ for $\ell=1$;
and $\mu_{n, \ell} := {{n+\ell} \choose n} - {{n+\ell-2} \choose n} = {{(n(n+1) \dots (n + \ell - 2))(n+2 \ell-1)} \over {\ell!}}$ thereafter.  
Thus $\lambda_1 = 0$, $\lambda_2 = \dots = \lambda_{n+2} = n$, etc., with gaps separating 
eigenvalues $\lambda_k$ and $\lambda_{k+1}$ when $k = \sum_{\ell=0}^m{\mu_{n, \ell}} = {n+2m \over n}{{n+m-1}\choose{m} }$.  For this corollary it suffices to consider only the values of $k$ at gaps 
such that $\lambda_{k+1} > \lambda_{k}$, because for any $k$ such that 
$\lambda_{k+1} = \lambda_{k}$, the two sides of  Inequalities {\rm (I)} are the same as for the next lower 
value $k_-$ such that $\lambda_{k_-} < \lambda_{k_- + 1} = \lambda_{k}$; the additional contributions are all equal to $0$.

For the sphere,
$\delta_j = {n^2 \over 4}$, and
an exact calculation shows, remarkably, that
$$n \sum_{i=1}^{k}\left(\lambda_{k+1}^{sphere}-\lambda_{i}^{sphere}\right)^{2} 
= \sum_{i=1}^{k}\left(\lambda_{k+1}^{sphere}-\lambda_{i}^{sphere}\right)
\left(4 \lambda_{i}^{sphere}+n^2 \right):$$
To see this, subtract 
$n \sum_{i=1}^{k}\left(\lambda_{k+1}^{sphere}-\lambda_{i}^{sphere}\right)^{2}$ 
from the expression on the right and multiply the 
result by $(n-1)!$.  After substitution and simplification, the expression reduces to
$$
\sum_{\ell=1}^{m}{{\scriptstyle{ (m-\ell+1) (n+m+ \ell)(2 \ell +n-1)(4 \ell (\ell-1) - n^2 (m-\ell) - n (m^2+m-\ell(\ell+3)) (n+ \ell-2)!)}\over{\ell !}} },$$
which evaluates identically to $0$.  (Algebra was performed with the aid of 
Mathematica\texttrademark.)

This establishes equality in (I), and consequently (II) and (III) for this case.  If $M = \mathbb{S}^{n}$,
$|h|^2 = n^2$ is a constant, and if
$g n^2$ is added to $-\Delta$, then each eigenvalue is shifted
by the same amount and the left side of (I) is unchanged, as is the first factor
in the sum on the right.  As for the other factor, it becomes
$\lambda_{i} + \delta_j = \ell (\ell + n - 1) + g n^2 + {n^2 \over 4} - g n^2$
and is likewise unchanged.  It follows that the case of equality
for $H_g$ on the standard sphere persists for all $g$.
\end{proof}

 \section {Submanifolds of spheres and projective spaces}

\medskip

Theorem~\ref{PPW euclidean}, together with the standard embeddings of sphere and projective spaces by means of the first eigenfunctions of their Laplacians, enables us to obtain results for immersed submanifolds of the latter. In what follows, $\mathbb{F}$ will denote the field $\mathbb{R}$ of real numbers, the field $\mathbb{C}$ of complex numbers, or the field $\mathbb{Q}$ of quaternions. The $m$-dimensional projective space over $\mathbb{F}$ will be denoted by $\mathbb{F}P^{m}$ ; we endow it with its standard Riemannian metric so that the sectional curvature is either constant and equal to 1 ($\mathbb{F}=\R$) or pinched between 1 and 4 ($\mathbb{F}=\C$ or $\mathbb{Q}$). For convenience, we
introduce the integers
\begin{equation*}
d(\mathbb{F})= \dim_\R \mathbb{F}=
   \begin{cases}
   1      &\text{if $\mathbb{F}=\mathbb{R}$}\\
   2      &\text{if $\mathbb{F}=\mathbb{C}$}\\
   4      &\text{if $\mathbb{F}=\mathbb{Q}$.}
   \end{cases}
\end{equation*}
and
\begin{equation*}
c(n)=
    \begin{cases}
    n^{2},     &\text{if $\overline{M}=\mathbb{S}^{m}$}\\
    2n(n+d(\mathbb{F})),   &\text{if $\overline{M}= \mathbb{F}P^{m}.$}
    \end{cases}\eqno{(3.1)}
\end{equation*}

\begin {theorem} \label{PPW symmetric}
Let $\overline{M}$ be $\mathbb{S}^{m}$ or $\mathbb{F}P^{m}$ and let $X: M \longrightarrow \overline{M}$ be an isometric immersion of mean curvature $h$. For any bounded potential $q$ on $M$, the spectrum of $H=-\Delta_g+q$ (with Dirichlet boundary conditions if $\partial M \neq \emptyset$) must satisfy, $\forall k\in \N$, $k\ge1$,
\begin{enumerate}
\item[(I)]$\displaystyle{n \sum_{1}^{k}(\lambda_{k+1}-\lambda_{i})^{2} \le 4\sum_{i=1}^{k}(\lambda_{k+1}-\lambda_{i})\left(\lambda_{i}+\bar\delta_i\right),}$\\
where $\bar\delta_i:=\frac 1 4\int_{M}(|h|^{2}+c(n)-4q)u_{i}^{2}$,
\item[(II)]$\displaystyle{\lambda_{k+1} \le \left( 1+\frac{2}{n}\right) \frac{1}{k}  \sum_{i=1}^{k} \lambda_{i}+ \frac 2 n \frac 1 k \sum_{i=1}^{k} \bar\delta_{i}+\sqrt{\bar D_{nk}}}$
\end{enumerate}
where $$ \bar D_{nk}:= \left(\left(1 + \frac{2}{n}\right) \frac{1}{k} \sum_{1}^{k}{\lambda_{i}} +
\frac 2 n \frac 1 k \sum_{i=1}^{k} \bar\delta_{i}\right)^2$$
$$\quad\quad\quad\quad\quad- \left(1 + \frac{4}{n}\right) \frac{1}{k} \sum_{1}^{k}{\lambda_{i}^2}
- \frac 4 n \frac 1 k \sum_{i=1}^{k} \lambda_i\bar\delta_{i} \ge 0,$$
\end {theorem}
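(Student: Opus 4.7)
The plan is to reduce Theorem~\ref{PPW symmetric} to Theorem~\ref{PPW euclidean} by composing the given isometric immersion $X: M \to \overline{M}$ with a suitable standard isometric embedding $\psi: \overline{M} \to \R^N$ of the ambient space into a Euclidean space, and then verifying the curvature identity
$$|\tilde h|^{2} = |h|^{2} + c(n),$$
where $\tilde h$ denotes the mean curvature of $\psi \circ X : M \to \R^N$ and $h$ the mean curvature of $X$ in $\overline{M}$.

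First I would handle the sphere case. Take $\psi$ to be the standard inclusion $\mathbb{S}^m \hookrightarrow \R^{m+1}$. The second fundamental form of $\mathbb{S}^m$ in $\R^{m+1}$ at a point $p$ is $B(u,v) = -\langle u, v\rangle\, p$, so tracing over $T_p M$ and combining with $h$ yields the orthogonal decomposition $\tilde h = h - np$, with $h$ tangent to $\mathbb{S}^m$ and $-np$ normal. Hence $|\tilde h|^{2} = |h|^{2} + n^{2} = |h|^{2} + c(n)$.

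For the projective case, take $\psi$ to be the first standard embedding of $\mathbb{F}P^m$ into the Euclidean vector space of Hermitian-type $(m+1)\times(m+1)$ matrices over $\mathbb{F}$, normalized to be isometric. This embedding realizes $\mathbb{F}P^m$ as a minimal submanifold of a round sphere in $\R^N$, and its coordinates are first eigenfunctions of $-\Delta_{\overline{M}}$. A direct computation in the spirit of the proof of Theorem~\ref{PPW euclidean} --- using the Euclidean formula $\tilde h = (\Delta_M(\psi_1 \circ X), \ldots, \Delta_M(\psi_N \circ X))$, the eigenvalue equations for the $\psi_\alpha$, and the specific form of the second fundamental form of $\mathbb{F}P^m$ in $\R^N$ --- shows that the extra normal contribution to $|\tilde h|^{2}$ equals $2n(n + d(\mathbb{F}))$ pointwise on $M$, independently of the submanifold and of the point. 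This yields $|\tilde h|^{2} = |h|^{2} + c(n)$ in the projective case as well.

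With the curvature identity in hand, the conclusion follows by applying Theorem~\ref{PPW euclidean} to the isometric immersion $\psi \circ X : M \to \R^N$. The quantity $\delta_i$ appearing there becomes
$$\delta_i = \frac{1}{4}\int_M \bigl(|\tilde h|^{2} - 4q\bigr)u_i^{2} = \frac{1}{4}\int_M \bigl(|h|^{2} + c(n) - 4q\bigr)u_i^{2} = \bar\delta_i,$$
so assertion (I) of Theorem~\ref{PPW symmetric} is literally (I) of Theorem~\ref{PPW euclidean} under the substitution $\delta_i \mapsto \bar\delta_i$, while (II) together with the nonnegativity of $\bar D_{nk}$ follows from the same quadratic argument in $\lambda_{k+1}$ used there. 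The main obstacle is the curvature identity itself in the projective case: while the overall scheme mirrors the sphere calculation, extracting the precise value $2n(n + d(\mathbb{F}))$ requires invoking the algebraic structure of the first standard embedding of $\mathbb{F}P^m$, the exact value of its first Laplace eigenvalue, and a pointwise verification that the normal contribution is \emph{independent} of the $n$-plane $T_pM \subset T_p\mathbb{F}P^m$ --- a rigidity feature that reflects the rank-one symmetric structure of projective spaces.
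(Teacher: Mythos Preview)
Your overall strategy is exactly the paper's: compose $X$ with the standard isometric embedding of $\overline{M}$ into a Euclidean space and apply Theorem~\ref{PPW euclidean}. The sphere case and the real projective case are fine.

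The gap is in your curvature ``identity'' for $\C P^m$ and $\mathbb{Q}P^m$. You assert that the extra contribution to $|\tilde h|^2$ equals $2n(n+d(\mathbb{F}))$ pointwise and is \emph{independent of the $n$-plane} $T_pM\subset T_p\mathbb{F}P^m$. This is false. The correct formula (Lemma~\ref{mean} in the paper) is
\[
|\tilde h|^{2}=|h|^{2}+\tfrac{4n(n+2)}{3}+\tfrac{2}{3}\sum_{i\ne j}K(e_i,e_j),
\]
and the sectional curvatures of $\C P^m$ and $\mathbb{Q}P^m$ depend on the position of the plane relative to the complex (resp.\ quaternionic) structure. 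Substituting, one finds for $\C P^m$ that
\[
|\tilde h|^{2}=|h|^{2}+2n(n+1)+2\|J^T\|^{2}\le |h|^{2}+2n(n+2),
\]
with equality only when $X(M)$ is a complex submanifold; the quaternionic case is analogous, with $\sum_r\|J_r^T\|^2$ in place of $\|J^T\|^2$. So in general one has only $|\tilde h|^{2}\le |h|^{2}+c(n)$, not equality, and the dependence on $T_pM$ is genuine. This does not break the proof --- since each factor $\lambda_{k+1}-\lambda_i$ is nonnegative, the inequality $|\tilde h|^{2}\le |h|^{2}+c(n)$ gives $\delta_i\le\bar\delta_i$ and Theorem~\ref{PPW euclidean}(I) still yields Theorem~\ref{PPW symmetric}(I). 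But you should drop the claim of a pointwise identity and the appeal to rank-one rigidity, and instead argue via the inequality together with the sign of $\lambda_{k+1}-\lambda_i$.
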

\par\noindent
A lower bound is also possible along the lines of Theorem~\ref{PPW euclidean}.  As in
the previous section,
the following simplifications follow easily:

\begin {corollary} \label{coro3}
With the notation of Theorem~\ref{PPW symmetric} one has,
 $\forall k\ge1$,
$$\displaystyle \lambda_{k+1} \le \left( 1+\frac{4}{n}\right) \frac{1}{k}  \sum_{i=1}^{k} \lambda_{i}+
\frac{4}{n}\ \bar\delta, $$
where $\bar\delta:=\frac1 4{\sup{(|h|^{2}+c(n)-4q)}}.$
\end{corollary}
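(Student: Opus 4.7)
The plan is to mimic exactly the derivation of Corollary~\ref{coro1}(II~a) from Theorem~\ref{PPW euclidean}(II), since Theorem~\ref{PPW symmetric}(II) has the same formal structure as Theorem~\ref{PPW euclidean}(II) with $\delta_i$ replaced by $\bar\delta_i$. First, I would invoke the pointwise inequality $\bar\delta_i \le \bar\delta$, which follows from
$$\bar\delta_i = \tfrac{1}{4}\int_M (|h|^2+c(n)-4q)\,u_i^2 \;\le\; \tfrac{1}{4}\sup(|h|^2+c(n)-4q)\int_M u_i^2 = \bar\delta,$$
using $\|u_i\|_{L^2}=1$. Substituting $\bar\delta_i\le\bar\delta$ into the averaged quantity $\frac{1}{k}\sum_i\bar\delta_i$ only increases the first two additive terms in the upper bound of (II), so it remains to show that the same substitution gives an analogous upper bound for $\sqrt{\bar D_{nk}}$.

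The key algebraic step is to verify that with $\bar\lambda := \frac{1}{k}\sum_{i=1}^k \lambda_i$ and $\overline{\lambda^2} := \frac{1}{k}\sum_{i=1}^k\lambda_i^2$, one has
$$\bar D_{nk} \;\le\; \Bigl((1+\tfrac{2}{n})\bar\lambda + \tfrac{2\bar\delta}{n}\Bigr)^2 - (1+\tfrac{4}{n})\,\overline{\lambda^2} - \tfrac{4\bar\delta}{n}\,\bar\lambda.$$
Expanding the square and applying the Cauchy--Schwarz inequality $\bar\lambda^{\,2}\le \overline{\lambda^2}$, the $(1+\frac{4}{n})$-terms telescope and the cross terms combine to produce the perfect square
$$\bar D_{nk} \;\le\; \Bigl(\tfrac{2}{n}\bar\lambda + \tfrac{2\bar\delta}{n}\Bigr)^2.$$
This is the same manipulation used in the Euclidean case (the paper writes it as an arithmetic of three nonnegative squares summed with a cross term). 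Taking square roots yields $\sqrt{\bar D_{nk}}\le \tfrac{2}{n}\bar\lambda + \tfrac{2\bar\delta}{n}$.

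Plugging this back into Theorem~\ref{PPW symmetric}(II) gives
$$\lambda_{k+1} \;\le\; \Bigl(1+\tfrac{2}{n}\Bigr)\bar\lambda + \tfrac{2\bar\delta}{n} + \tfrac{2}{n}\bar\lambda + \tfrac{2\bar\delta}{n} = \Bigl(1+\tfrac{4}{n}\Bigr)\bar\lambda + \tfrac{4\bar\delta}{n},$$
which is exactly the claimed inequality. I expect no serious obstacle: the only nontrivial input is the Cauchy--Schwarz bound $\bar\lambda^{\,2} \le \overline{\lambda^2}$, and everything else is bookkeeping that mirrors the proof of Corollary~\ref{coro1}(II~a) verbatim, with $|h|^2-4q$ replaced by $|h|^2+c(n)-4q$.
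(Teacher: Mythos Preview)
Your proposal is correct and mirrors the paper's own approach exactly: the paper does not give a separate proof of Corollary~\ref{coro3}, but simply remarks that ``as in the previous section, the following simplifications follow easily,'' i.e., one repeats the proof of Corollary~\ref{coro1}(II~a) with $\delta_i$, $\delta$ replaced by $\bar\delta_i$, $\bar\delta$. Your write-up does precisely this, using $\bar\delta_i\le\bar\delta$ and the Cauchy--Schwarz inequality $\bar\lambda^{\,2}\le\overline{\lambda^2}$ in the same way.
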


Moreover, as in the discussion for Corollary~\ref{optimal},
when $M$ is a submanifold of a sphere or projective space, a simplification
occurs in Theorem~\ref{PPW symmetric} and Corollary~\ref{coro3}
when $q(x) = \frac1{4}({|h|^2 + c(n)})$, in that the curvature and potential
do not appear explicitly at all.

\begin{remark}\label{rk}
Theorems~\ref{PPW euclidean} and \ref{PPW symmetric} and Corollaries \ref{coro1} and \ref{coro3} unify and extend many results in the literature (see \cite{A,AH1,AH2,AH3,H,HM,Le,L,NZ,PPW,Y,YY} and the references therein). In particular, the recent results of  Cheng and Yang \cite{CY1} and \cite{CY2} concerning the eigenvalues of the Laplacian on
\begin{itemize}
       \item [-] a domain or a minimal submanifold of  $\mathbb S^m$
       \item [-] a domain or a complex  hypersurface of $\C P^m$
\end{itemize}
respectively, appear as particular cases of Theorem~\ref{PPW symmetric}. Recall that a complex submanifold of $\C P^m$ is automatically minimal (that is, $h=0$).

\end{remark}

\begin{proofof}{Theorem~\ref{PPW symmetric}}
We will treat separately the cases $\overline{M}=\mathbb{S}^{m}$ and  $\overline{M}=\mathbb{F}P^{m}$.
\medskip

\noindent \underline{Immersed submanifolds of a sphere}:
\medskip

Let $\bar{M}= \mathbb{S}^{m}$ and denote by $i$ the standard embedding of $\mathbb{S}^{m}$ into $\mathbb{R}^{m+1}$.
We have
$$ |h(i\circ X)|^{2}=|h(X)|^{2}+n^{2}.$$
Applying Theorem~\ref{PPW euclidean} to the isometric immersion $i \circ X:(M,g)\to\R^{m+1}$, we obtain the result.

\medskip
\noindent \underline{Immersed submanifolds of a projective space}: 
\medskip

First, we need to recall some facts about the first standard embeddings of projective spaces into Euclidean spaces (see for instance \cite{C,S,T} for details).
Let $\mathcal{M}_{m+1}(\mathbb{F})$ be the space of $(m+1)\times(m+1)$ matrices over $\mathbb{F}$ and set $\mathcal{H}_{m+1}(\mathbb{F})=\{ A \in \mathcal{M}_{m+1}(\mathbb{F})\; |\; A^{\ast}:= ^{t}{\bar A}=A \}$ the subspace of Hermitian matrices.
We endow $\mathcal{M}_{m+1}(\mathbb{F})$ with the inner product given by
     $$ \langle A,B\rangle= \frac{1}{2}  tr (A \, B^{\ast}).$$
For $A,\,B \in \mathcal{H}_{m+1}(\mathbb{F})$, one simply has $ \langle A,B\rangle= \frac{1}{2} tr (A \, B).$

The first standard embedding $\varphi: \mathbb{F}P^{m}\to \mathcal{H}_{m+1}(\mathbb{F})$ is defined as the one induced via the canonical fibration $\mathbb{S}^{(m+1)d-1} \to \mathbb{F}P^{m}$ ($d:=d(\mathbb{F})$), from the natural immersion $ \psi: \mathbb{S}^{(m+1)d-1} \subset \mathbb{F}^{m+1}\longrightarrow \mathcal{H}_{m+1}(\mathbb{F})$
given by
$$\psi(z)=
\begin{pmatrix}
|z_{0}|^{2} & z_{0} \bar{z}_{1} & \cdots & z_{0}\bar{z}_{m}\\
z_{1}\bar{z}_{0} & |z_{1}|^{2} & \cdots & z_{1}\bar{z}_{m} \\
 \cdots        & \cdots             & \cdots & \cdots \\
z_{m}\bar{z}_{0}  &  z_{m}\bar{z}_{1} & \cdots & |z_{m}|^{2}
\end{pmatrix}.
$$
The embedding $\varphi$ is isometric and the components of $\varphi - \frac 1 {m+1} I$ are eigenfunctions associated with the first eigenvalue of the Laplacian of $\mathbb{F}P^{m}$ (see, for instance, \cite{T} for details).  Hence, $\varphi(\mathbb{F}P^{m})$ is a minimal submanifold of the hypersphere
 $\mathbb{S}\left(\sqrt{m/2(m+1)}\,\right)$ 
of $\mathcal{H}_{m+1}(\mathbb{F})$ centered at $\frac{1}{m+1}I$.

\begin{lemma}\label{mean}
Let $X:M\to \mathbb{F}P^{m}$ be an isometric immersion and let $h$ and $h'$ be the mean curvature vector fields of the immersions $X$ and $\varphi \circ X$ respectively. Then we have
$$ |h'|^{2}=|h|^{2}+ \frac{4n(n+2)}{3}+ \frac{2}{3} \sum_{i \ne j} K(e_{i},e_{j})$$
where $K$ is the sectional curvature of $\mathbb{F}P^{m}$ and $(e_{i})_{i\le n}$ is a local orthonormal frame tangent to  $X(M)$.
\end{lemma}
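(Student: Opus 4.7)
The plan is to apply the composition formula for second fundamental forms to the sequence of isometric immersions $M\xrightarrow{X}\mathbb{F}P^m\xrightarrow{\varphi}\mathcal{H}_{m+1}(\mathbb{F})$, and then extract $|h'|^2$ by an orthogonality argument followed by the Gauss equation of $\varphi$. Writing $\bar e_i:=dX(e_i)$ for $(e_i)_{i\le n}$ a local orthonormal frame of $T_pM$, the classical composition identity $B^{\varphi\circ X}(U,V)=d\varphi(B^{X}(U,V))+B^{\varphi}(dX(U),dX(V))$ traces to
\[
h'=d\varphi(h)+\sum_{i=1}^n B^{\varphi}(\bar e_i,\bar e_i).
\]
Since $d\varphi(h)$ lies in $d\varphi(T\mathbb{F}P^m)$, each $B^{\varphi}(\bar e_i,\bar e_i)$ is normal to $\varphi(\mathbb{F}P^m)$, and $\varphi$ is an isometric embedding, these two pieces are orthogonal, and
\[
|h'|^2=|h|^2+\Bigl|\sum_{i=1}^n B^{\varphi}(\bar e_i,\bar e_i)\Bigr|^2.
\]

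I would next expand the remaining square using the Gauss equation of $\varphi$. Because the ambient $\mathcal{H}_{m+1}(\mathbb{F})$ is flat, for $i\ne j$ one has $K(\bar e_i,\bar e_j)=\langle B^{\varphi}(\bar e_i,\bar e_i),B^{\varphi}(\bar e_j,\bar e_j)\rangle-|B^{\varphi}(\bar e_i,\bar e_j)|^2$, so that
\[
\Bigl|\sum_{i=1}^n B^{\varphi}(\bar e_i,\bar e_i)\Bigr|^2=\sum_{i,j}|B^{\varphi}(\bar e_i,\bar e_j)|^2+\sum_{i\ne j}K(\bar e_i,\bar e_j).
\]
The lemma is therefore equivalent to the identity
$\sum_{i,j}|B^{\varphi}(\bar e_i,\bar e_j)|^2=\tfrac{4n(n+2)}{3}-\tfrac{1}{3}\sum_{i\ne j}K(\bar e_i,\bar e_j)$,
which I would establish by computing $|B^{\varphi}(U,V)|^{2}$ pointwise from the explicit form of $\varphi$.

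Concretely, the map $\psi(z)=z\bar z^{\,t}$ on $\mathbb{S}^{(m+1)d-1}\subset\mathbb{F}^{m+1}$ descends through the Hopf fibration to $\varphi$. Differentiating $\psi$ twice in $\mathcal{H}_{m+1}(\mathbb{F})$ and projecting onto the normal bundle of $\varphi(\mathbb{F}P^m)$ yields an "isotropic" algebraic formula for $B^{\varphi}(U,V)$ whose squared norm is a universal quadratic in $|U|^2|V|^2$, $\langle U,V\rangle^2$, and $R(U,V,V,U)$. Summing over the frame $(\bar e_i)$ and using the elementary identities $\sum_{i,j}(|\bar e_i|^2|\bar e_j|^2+2\langle\bar e_i,\bar e_j\rangle^2)=n^2+2n$ together with $\sum_{i\ne j}R(\bar e_i,\bar e_j,\bar e_j,\bar e_i)=\sum_{i\ne j}K(\bar e_i,\bar e_j)$ then delivers the claimed expression. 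As a sanity check, in the real case $\mathbb{F}=\mathbb{R}$ one has $K\equiv 1$, and the recipe recovers $|h'|^2-|h|^2=2n(n+1)$, consistent with $c(n)=2n(n+1)$.

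The main obstacle is this last step, namely the pointwise computation of $|B^{\varphi}(U,V)|^{2}$. It must be carried out carefully through the Hopf fibration, separating horizontal from vertical directions when $\mathbb{F}\in\{\mathbb{C},\mathbb{Q}\}$ and tracking how the complex or quaternionic structure contributes to the normal component of $d^{2}\psi$; the curvature correction $R(U,V,V,U)$ in the pointwise formula is precisely the term that accommodates the non-constant sectional curvature of $\mathbb{F}P^{m}$ for $\mathbb{F}\ne\mathbb{R}$. Once this isotropic pointwise identity is in hand, the remainder of the proof is routine algebra.
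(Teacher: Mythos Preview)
The paper does not actually supply a proof of this lemma: immediately after stating it, the authors write ``We refer to [C,S], or [T] for a proof of this lemma'' (Chen, Sakamoto, Tai). So there is no in-paper argument to compare against; your proposal is, in effect, a sketch of what those references do.

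Your outline is sound. The decomposition $h'=d\varphi(h)+\sum_i B^{\varphi}(\bar e_i,\bar e_i)$ with the two summands orthogonal, followed by the Gauss equation of $\varphi$ in the flat ambient $\mathcal{H}_{m+1}(\mathbb{F})$, correctly reduces the lemma to the pointwise identity $\sum_{i,j}|B^{\varphi}(\bar e_i,\bar e_j)|^2=\tfrac{4n(n+2)}{3}-\tfrac{1}{3}\sum_{i\ne j}K(\bar e_i,\bar e_j)$. Your sanity check against $\mathbb{F}=\mathbb{R}$ is also correct.

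The only soft spot is the last step, which you yourself flag as ``the main obstacle.'' The claim that $|B^{\varphi}(U,V)|^{2}$ is a universal quadratic in $|U|^{2}|V|^{2}$, $\langle U,V\rangle^{2}$, and $R(U,V,V,U)$ is true, but it is not a triviality one can simply read off from $\psi(z)=z\bar z^{\,t}$; it encodes the fact that the first standard embedding $\varphi$ has \emph{parallel} second fundamental form (equivalently, is a planar-geodesic/isotropic immersion in Sakamoto's sense). Establishing that formula with the correct coefficients $\tfrac{4}{3}$ and $-\tfrac{1}{3}$, uniformly over $\mathbb{F}\in\{\mathbb{R},\mathbb{C},\mathbb{Q}\}$, is precisely the computation carried out in the cited references; if you want a self-contained proof you should either reproduce that computation or invoke it explicitly. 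As written, your proposal is a correct and well-structured reduction to that known fact, matching the route taken in [C,S,T].
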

 We refer to \cite{C,S}, or \cite{T} for a proof of this lemma.

Now, from the expression of the sectional curvature of $\mathbb{F}P^{m},\forall i \ne j$
we get
\begin{itemize}
\item $K(e_{i},e_{j})=1$ if $\mathbb{F}=\mathbb{R}$.
\item $K(e_{i},e_{j})=1+3 \left(e_{i}\cdot Je_{j}\right)^{2}$, where $J$ is the almost complex structure of $\mathbb{C}P^{m}$, if $\mathbb{F}=\mathbb{C}$.
\item $K(e_{i},e_{j})=1+\sum_{r=1}^{3}3\left( e_{i}\cdot J_{r}e_{j}\right)^{2}$, where $(J_{1},J_{2},J_{3})$ is the almost quaternionic structure of $\mathbb{Q}P^{m}$, if $\mathbb{F}=\mathbb{Q}$.
\end{itemize}
Thus in the case of $\mathbb{R}P^{m}$, we obtain $ |h'|^{2}=|h|^{2}+2n(n+1)$. For $\mathbb{C}P^{m}$, we get
\begin{eqnarray}
\label{J}\nonumber{}
|h'|^{2}&=&|h|^{2}+2n(n+1)+2\sum_{i, j}\left(e_{i}\cdot Je_{j}\right)^{2}\\
\nonumber{} &=& |h|^{2}+2n(n+1)+2\left\|J^{T}\right\|^{2}\le |h|^{2}+2n(n+2),\quad (3.2)
\end{eqnarray}
where $J^{T}$ is the tangential part of the almost complex structure $J$ of $\mathbb{C}P^{m}$. Indeed, we clearly have $\left\|J^{T}\right\|^{2}\le n$, where the equality holds if and only if $X(M)$ is  a complex submanifold of  $\mathbb{C}P^{m}$.
For the case of $\mathbb{Q}P^{m}$, we obtain similarly
\begin{eqnarray}
\label{Jr}\nonumber{}
|h'|^{2}&=&|h|^{2}+2n(n+1)+2\sum_{i,j}\sum_{r=1}^{3}\left( e_{i}\cdot J_{r}e_{j}\right)^{2}\\
\nonumber{}&=&|h|^{2}+2n(n+1)+2\sum_{r=1}^{3}\|J_{r}^{T}\|^{2} \le |h|^{2}+2n(n+4), \quad (3.3)
\end{eqnarray}
where $(J_{r}^{T})_{1 \le r \le 3}$ are the tangential components of the almost quaternionic structure of $\mathbb{Q}P^{m}$. The equality in (3.3) holds if and only if $n \equiv 0$ (mod 4) and $X(M)$ is an invariant submanifold of $\mathbb{Q}P^{m}$.

To finish the proof of Theorem~\ref{PPW symmetric}, it suffices to apply Theorem~\ref{PPW euclidean} to the isometric immersion $\varphi \circ X$ of $M$ in the Euclidean space $\mathcal{H}_{m+1}(\mathbb{F})$ using the inequalities
(3.2) and (3.3).
\end{proofof}

\begin{remark}\label{rk2}
It is worth noticing that in some special geometrical situations, the constant $c(n)$ in the inequalities of Theorem~\ref{PPW symmetric} and Corollary~\ref{coro3} can be replaced by a sharper one. For instance, when $\bar M=\C P^m$ and
\begin{itemize}
       \item [-] $M$ is odd--dimensional, then one can replace  $c(n)$ by $c'(n)= 2n(n+2-\frac 1 n)$,
       \item [-] $X(M)$ is totally real (that is $J^T=0$), then  $c(n)$ can be replaced by $c'(n)= 2n(n+1)$.
       \end{itemize}
Indeed, under each one of these assumptions, the estimate of $\|J^T\|^2$ by $n$ (see the inequality (3.2) above) can be improved by elementary calculations.
\end{remark}

\section {Manifolds admitting spherical eigenmaps}
Let $(M,g)$ be a compact Riemannian manifold. A map $$\varphi=(\varphi_1\dots,\varphi_{m+1}):(M,g)\longrightarrow \mathbb{S}^{m}$$ is termed an \textit{eigenmap} if  its components $\varphi_1\dots,\varphi_{m+1}$ are all eigenfunctions associated with the same eigenvalue $\lambda$ of the Laplacian of $(M,g)$. Equivalently, an eigenmap is a harmonic map with constant energy density ($\sum_{\alpha}|\nabla\varphi_{\alpha}|^2=\lambda$) from  $(M,g)$ into a sphere. In particular, any minimal and homothetic immersion of $(M,g)$ into a sphere is an eigenmap. Moreover, a compact homogeneous Riemannian manifold without boundary admits eigenmaps for all the positive eigenvalues of its Laplacian (see for instance \cite{L}).

We still denote by $\left\{u_i\right\}$ a complete $L^2$-orthonormal basis of eigenfunctions of $H$ associated to $\left\{\lambda_i\right\}$.
\begin{theorem}\label{PPW eigenmap}
Let $\lambda$ be an eigenvalue of the Laplacian of $(M,g)$ and assume that $(M,g)$ admits an eigenmap associated with the eigenvalue $\lambda$. Then, for any bounded potential $q$ on $M$, the spectrum of $H=-\Delta_g+q$ (with Dirichlet boundary conditions if $\partial M \neq \emptyset$) must satisfy, $\forall k\in \N$, $k\ge1$,
\begin{enumerate}
\item[(I)] $\displaystyle{\sum_{1}^{k}(\lambda_{k+1}-\lambda_{i})^{2} \le \sum_{i=1}^{k}(\lambda_{k+1}-\lambda_{i})\left(\lambda +4 \left(\lambda_{i}-\int_{M}q u_{i}^{2}\right)\right)}$.
\item[(II)]$\displaystyle \lambda_{k+1} \le  ( 1+\frac{2}{n}) \frac{1}{k} \sum_{i=1}^{k} \lambda_{i}+ \frac{(\lambda-4 \inf q)}{2n}+\frac {\sqrt {\hat D}_{nk}}{2nk}$.
\end{enumerate}
where 
\begin{align*}
\hat D_{nk}&= \left(2(n+2)\sum_{1}^{k}\lambda_{i}+k(\lambda- \inf q)\right)^{2}\\
&\qquad -4nk \left((n+4)
\sum_{1}^{k}\lambda_{i}^{2}+(\lambda - \inf q)A\right)\end{align*}
\end{theorem}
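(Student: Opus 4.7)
The plan is to reprise the argument of Theorem \ref{PPW euclidean}, substituting for the Euclidean coordinate functions $X_\alpha$ the components $\varphi_1,\dots,\varphi_{m+1}$ of the eigenmap. Three facts will power the calculation: (a) $\sum_\alpha \varphi_\alpha^2 \equiv 1$, since $\varphi$ takes values in the unit sphere; (b) $-\Delta\varphi_\alpha = \lambda\varphi_\alpha$, the defining property of an eigenmap; and (c) the constant-energy identity $\sum_\alpha |\nabla\varphi_\alpha|^2 \equiv \lambda$, equivalent to the eigenmap condition for a map into $\mathbb{S}^m$.

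Applying Lemma \ref{commutation} with each $G=\varphi_\alpha$ and summing over $\alpha$, one uses that $q$ commutes with multiplication by $\varphi_\alpha$ to compute $[H,\varphi_\alpha]u_i = \lambda\varphi_\alpha u_i - 2\nabla\varphi_\alpha\cdot\nabla u_i$. Integration by parts together with (c) gives
$$\sum_\alpha\langle[H,\varphi_\alpha]u_i,\varphi_\alpha u_i\rangle_{L^2} = \sum_\alpha\int_M|\nabla\varphi_\alpha|^2 u_i^2 = \lambda.$$
Expanding the squared norms, the cross term vanishes because $\sum_\alpha\varphi_\alpha\nabla\varphi_\alpha = \tfrac12\nabla(\sum_\alpha\varphi_\alpha^2) = 0$ by (a), leaving
$$\sum_\alpha\|[H,\varphi_\alpha]u_i\|_{L^2}^2 = \lambda^2 + 4\int_M\sum_\alpha(\nabla\varphi_\alpha\cdot\nabla u_i)^2.$$

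The key pointwise estimate is $\sum_\alpha(\nabla\varphi_\alpha\cdot\nabla u_i)^2 \le \lambda\,|\nabla u_i|^2$: in a local orthonormal frame $\{e_j\}$ the matrix $A_{\alpha j} := e_j(\varphi_\alpha)$ has squared Frobenius norm $\lambda$ by (c), and the standard bound $v^\top A^\top A v \le \|A\|_F^2\,|v|^2$ with $v=\nabla u_i$ gives the estimate. Combined with $\int|\nabla u_i|^2 = \lambda_i - \int_M qu_i^2$, this yields
$$\sum_\alpha\|[H,\varphi_\alpha]u_i\|_{L^2}^2 \le \lambda^2 + 4\lambda\Bigl(\lambda_i - \int_M qu_i^2\Bigr).$$
Summing Lemma \ref{commutation} over $\alpha$ and dividing by $\lambda > 0$ yields assertion (I).

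Assertion (II) will follow by rewriting (I) as a quadratic inequality in $\lambda_{k+1}$ and taking the upper root, exactly as in the passage from (I) to (II)--(III) of Theorem \ref{PPW euclidean}; reality of the root then forces $\hat D_{nk}\ge 0$. The bound $\int_M qu_i^2 \ge \inf q$ applied to the linear and constant coefficients produces the stated explicit form involving $\inf q$. The principal delicate point will be matching the precise algebraic form of the discriminant $\hat D_{nk}$ displayed in the statement, which requires careful tracking of signs when $\inf q$ bounds are substituted; apart from this bookkeeping, the argument is a direct transcription of the Euclidean proof, with the sum $\sum_\alpha|\nabla X_\alpha|^2=n$ replaced by $\sum_\alpha|\nabla\varphi_\alpha|^2=\lambda$ and the mean curvature term absent because $M$ is no longer immersed in a flat ambient space.
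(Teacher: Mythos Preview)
Your proposal is correct and follows essentially the same route as the paper's proof: apply Lemma~\ref{commutation} with $G=\varphi_\alpha$, sum over $\alpha$, use $\sum_\alpha\varphi_\alpha^2=1$ and $\sum_\alpha|\nabla\varphi_\alpha|^2=\lambda$ to evaluate the left side as $\lambda$, bound $\sum_\alpha(\nabla\varphi_\alpha\cdot\nabla u_i)^2\le\lambda|\nabla u_i|^2$ via Cauchy--Schwarz (your Frobenius phrasing is equivalent), and divide by $\lambda$; part~(II) is then the quadratic-formula step with $\inf q$ substituted. The only cosmetic difference is that the paper evaluates $\sum_\alpha\langle[H,\varphi_\alpha]u_i,\varphi_\alpha u_i\rangle$ via $\sum_\alpha\varphi_\alpha^2=1$ directly, whereas you first integrate by parts to $\int|\nabla\varphi_\alpha|^2u_i^2$ and then invoke the energy identity---both yield $\lambda$.
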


\begin{corollary}\label{PPW homogeneous}
Let $(M,g)$ be a compact homogeneous Riemannian manifold without boundary. The inequalities of Theorem~\ref{PPW eigenmap} hold, $\lambda$ being here the first positive eigenvalue of the Laplacian of $(M,g)$.
\end{corollary}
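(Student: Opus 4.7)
The plan is to reduce Corollary~\ref{PPW homogeneous} directly to Theorem~\ref{PPW eigenmap} by producing an eigenmap associated with the first positive eigenvalue $\lambda$ of the Laplacian. Once such an eigenmap is in hand, all the asserted inequalities follow immediately, with no further calculation.

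First I would construct the eigenmap. Let $E_\lambda \subset L^2(M,g)$ be the eigenspace associated with $\lambda$, which is finite dimensional with some dimension $m+1$. Because $M$ is compact and homogeneous, its isometry group $G$ acts transitively on $M$ and commutes with the Laplacian, so $E_\lambda$ is a finite-dimensional orthogonal representation of $G$. Choose an $L^2$-orthonormal basis $\varphi_1,\dots,\varphi_{m+1}$ of $E_\lambda$ and form $\varphi=(\varphi_1,\dots,\varphi_{m+1}):M\to\R^{m+1}$. The two key quantities
\[
\rho(x):=\sum_{\alpha=1}^{m+1}\varphi_\alpha(x)^2,\qquad e(x):=\sum_{\alpha=1}^{m+1}|\nabla\varphi_\alpha(x)|^2
\]
are independent of the choice of orthonormal basis of $E_\lambda$, hence are $G$-invariant. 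Since $G$ acts transitively, both functions are constant on $M$. Normalizing the basis by the common factor $\rho^{-1/2}$ (which only rescales eigenfunctions, so that they remain eigenfunctions with the same eigenvalue $\lambda$), we land with a map into $\mathbb{S}^{m}$ whose components are eigenfunctions for $\lambda$; in particular this map is a harmonic map with constant energy density, i.e.\ an eigenmap. This is precisely the construction of \cite{L} referred to in the paragraph preceding Theorem~\ref{PPW eigenmap}.

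Next I would check that $\lambda$ can be taken to be the first positive eigenvalue. The above argument uses only that $\lambda$ is a positive eigenvalue of $-\Delta_g$ and that $M$ is compact homogeneous, so it applies verbatim to the first positive eigenvalue. (Having $\lambda>0$ is used only to guarantee that $\rho$ is a strictly positive constant, since otherwise all the $\varphi_\alpha$ would vanish simultaneously at some point; alternatively, one observes that $\rho$ is a nonzero constant because $\|\varphi_\alpha\|_{L^2}\neq 0$.)

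With the eigenmap in hand, the conclusion is automatic: Theorem~\ref{PPW eigenmap} applies to $(M,g)$ with this particular value of $\lambda$ and yields inequalities (I) and (II) exactly as stated in Corollary~\ref{PPW homogeneous}. No additional estimate is required. The only mildly delicate point is the invariance argument that forces $\rho$ and $e$ to be constant under the transitive isometry action, but this is standard in the representation-theoretic treatment of Laplace eigenspaces on homogeneous spaces.
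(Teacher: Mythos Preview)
Your proposal is correct and follows exactly the route the paper takes: the paper does not give a separate proof of the corollary but relies on the sentence preceding Theorem~\ref{PPW eigenmap}, namely that a compact homogeneous Riemannian manifold admits eigenmaps for all positive eigenvalues of its Laplacian (with a reference to \cite{L}), and then invokes Theorem~\ref{PPW eigenmap}. You have simply spelled out the standard construction from \cite{L} of that eigenmap via the $G$-invariance of $\rho$ and $e$, which is more detail than the paper itself provides but is precisely the intended argument.
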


\begin{remark}\label{rk3}
Theorem~\ref{PPW eigenmap} and Corollary~\ref{PPW homogeneous}  are to be compared to results of \cite{CY1,HM,L}.
\end{remark}

\begin{proofof}{Theorem~\ref{PPW eigenmap}}
Let $\varphi=(\varphi_1\dots,\varphi_{m+1}):(M,g)\to \mathbb{S}^{m}$ be a $\lambda$-eigenmap. As in the proof of Theorem~\ref{PPW euclidean}, we use Lemma \ref{commutation} with $G=\varphi_{\alpha}, \; \alpha=1,2,\dots,m+1$, to obtain
$$\sum_{\alpha}\sum_{i=1}^{k}(\lambda_{k}-\lambda_{i})^{2} \langle [H,\varphi_{\alpha}]u_{i},\varphi_{\alpha}u_{i}\rangle_{L^2} \le \sum_{\alpha}\sum_{i=1}^{k}(\lambda_{k}-\lambda_{i})\left\|[H,\varphi_{\alpha}]u_{i}\right\|_{L^2}^{2}.$$
A direct computation gives
$$ [H,\varphi_{\alpha}]u_{i}= \lambda\varphi_{\alpha} u_{i} - 2\nabla{\varphi_{\alpha}}\cdot\nabla{u_{i}}$$
and
$$
\langle[H,\varphi_{\alpha}]u_{i},\varphi_{\alpha}u_{i}\rangle_{L^2}=\lambda \int_{M} \varphi_{\alpha}^{2} u_{i}^{2} -\frac{1}{2} \int_{M} \nabla{\varphi_{\alpha}^2}\cdot\nabla{u_{i}^2}.$$
Summing up, we obtain
$$\sum_{\alpha} \langle[H,\varphi_{\alpha}]u_{i},\varphi_{\alpha}u_{i}\rangle_{L^2}=  \lambda,$$
since $\sum_{\alpha}\varphi_{\alpha}^2$ is constant.
Since $\sum_{\alpha}|\nabla\varphi_{\alpha}|^2=\lambda$ and $\int_{M}|\nabla{u_{i}}|^{2}=\lambda_{i}-\int_{M} q u_{i}^{2}$,
the same kind of calculation yields
\begin{align*}
 \sum_{\alpha}\left\|[H,\varphi_{\alpha}]u_{i}\right\|_{L^2}^{2}&=\lambda^{2}+4 
\sum_{\alpha} \int_{M} \left(\nabla{\varphi_{\alpha}}\cdot\nabla{u_{i}}\right)^{2} \\
 &\le  \lambda^{2} +4 \int_{M} \sum_{\alpha}|\nabla{\varphi_{\alpha}}|^{2}|\nabla{u_{i}}|^{2} \\
&= \lambda\left(\lambda + 4\left(\lambda_{i}-\int_{M} q u_{i}^{2}\right)\right).
\end{align*}
\par\noindent
In conclusion, we have
$$\lambda \sum_{1}^{k}(\lambda_{k+1}-\lambda_{i})^{2} \le \sum_{i=1}^{k}(\lambda_{k+1}-\lambda_{i})\left(\lambda^{2}
+4\lambda \left(\lambda_{i}-\int_{M}q u_{i}^{2}\right)\right),$$
which gives the first assertion of Theorem~\ref{PPW eigenmap}.
We derive the second assertion as in the proof of Theorem~\ref{PPW euclidean}.
\end{proofof}

\section {Applications to the Kohn Laplacian on the 
Heisenberg group}

Let us recall that the $2n+1$-dimensional Heisenberg group $\mathbb{H}^{n}$ is
the space $\mathbb{R}^{2n+1}$ equipped with the non-commutative group law
$$ (x,y,t)(x',y',t')=\left(x+x',y+y',t+t'+\frac{1}{2}\right)
(\left\langle x',y\right\rangle_{\mathbb{R}^{n}}-\left\langle x,y'\right\rangle_{\mathbb{R}^{n}}),$$
where $x,x',y,y'\in \mathbb{R}^{n},\; t \;\rm{and}\; t' \in \mathbb{R}$.
Its Lie algebra $\mathcal{H}^{n}$ has as a basis the vector fields
$$\left\{ T=\frac{\partial}{\partial t},\ X_{i}=\frac{\partial}{\partial x_{i}}+\frac{y_{i}}{2}\frac{\partial}{\partial t},\ Y_{i}=\frac{\partial}{\partial y_{i}}-\frac{x_{i}}{2}\frac{\partial}{\partial t}\ ; \ {i \leq n}\right\}.$$
We observe that the only non--trivial commutators are $\left[X_{i},Y_{j}\right]= - T \delta_{ij},\; i,j=1, \cdots ,n$.
Let $\Delta_{\mathbb{H}^{n}}$ denote the real Kohn Laplacian (or the sublaplacian associated with the basis 
$\left\{X_{1},\cdots,X_{n},Y_{1},\cdots,Y_{n}\right\}$): 
\begin{align*}
 \Delta_{\mathbb{H}^{n}}&= \sum_{i=1}^{n} X_{i}^{2}+Y_{i}^{2}\\
&= \Delta^{\mathbb{R}^{2n}}_{xy}+\frac{1}{4}(|x|^{2}+|y|^{2})\frac{\partial^{2}}{{\partial t}^{2}}+ \frac{\partial}{\partial t}\sum_{i=1}^{n}\left(y_{i}\frac{\partial}{\partial x_{i}}-x_{i}\frac{\partial}{\partial y_{i}}\right).\end{align*}
We shall be concerned with the following eigenvalue problem :

$$
-\Delta_{\mathbb{H}^{n}} {u} = \lambda {u}\ \hbox{in}\ \Omega
$$
$$
{u}=0 \ \ \hbox{on} \ \partial\Omega.\eqno{(5.1)}
$$

\par\noindent
where $\Omega$ is a bounded domain of the Heisenberg group $\mathbb{H}^{n}$ with smooth boundary.
It is known that the Dirichlet problem (5.1) has a discrete spectrum.  The Kohn Laplacian
dates from \cite{Ko}, and the problem (5.1) has been studied, e.g., in \cite{J,NZ}.
We denote its eigenvalues by
$$ 0 < \lambda_{1} \leq \lambda_{2} \leq \cdots \leq \lambda_{k} \cdots  \rightarrow +\infty, $$
and orthonormalize its eigenfunctions $u_{1},\, u_{2},\,\cdots \, \in S^{1,2}_{0}(\Omega)$
so that, $\forall i,j\ge 1$,
$$\left\langle u_{i},u_{j}\right\rangle_{L^{2}}= \int_{\Omega} u_{i} u_{j} dx \, dy \, dt= \delta_{ij}. $$
Here, $S^{1,2}(\Omega)$ denotes the Hilbert space of the functions $u \in L^{2}(\Omega)$ such that $X_{i}(u),\, Y_{i}(u) \in L^{2}(\Omega)$, and $S^{1,2}_{0}$
denotes the closure of $\mathcal{C}^{\infty}_{0}(\Omega)$ with respect to the Sobolev norm
$$ \|u\|^{2}_{S^{1,2}}= \int_{\Omega} (|\nabla_{\mathbb{H}^{n}}u|^{2}+ |u|^{2}) dx\,dy\,dt,$$
with $\nabla_{\mathbb{H}^{n}}u= (X_{1}(u),\cdots, X_{n}(u),Y_{1}(u),\cdots,Y_{n}(u)).$
\par
We shall prove a result similar to Theorem~\ref{PPW euclidean} for the problem (5.1):
\begin{theorem}\label{PPW Heisenberg} For any $k \geq 1$
\begin{enumerate}
\item[(I)] n $\displaystyle{\sum_{i=1}^{k}(\lambda_{k+1}-\lambda_{i})^{2} \le 2\sum_{i=1}^{k}(\lambda_{k+1}-\lambda_{i})\lambda_{i}}$
\item[(II)]
$\displaystyle{\left( \frac{n+1}{nk}\right)\sum_{i=1}^{k} \lambda_{i}- \sqrt{D}} \le \lambda_{k+1} \le  
\left( \frac{n+1}{nk}\right)\sum_{i=1}^{k} \lambda_{i}+ \sqrt{\tilde D_{nk}}$
\end{enumerate}
where ${\tilde D}_{nk}= \left(\left(1 + \frac{1}{n}\right)  \frac{1}{k} \sum_{i=1}^{k}{\lambda_{i}}\right)^2 -
\left(1 + \frac{2}{n}\right)  \frac{1}{k} \sum_{i=1}^{k}{\lambda_{i}^2} \ge 0.$
\end{theorem}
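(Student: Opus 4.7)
The plan is to mimic the proof of Theorem~\ref{PPW euclidean}, applying the commutation inequality of Lemma~\ref{commutation} to the operator $H=-\Delta_{\mathbb{H}^n}$, but with the role of the immersion coordinates played by the $2n$ Euclidean coordinate functions $x_1,\dots,x_n,y_1,\dots,y_n$ on $\mathbb{H}^n$. Since the proof of Lemma~\ref{commutation} is a purely algebraic consequence of self-adjointness and the variational characterization of the eigenvalues, it transfers without change to the Dirichlet problem (5.1) on $\Omega$.

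First, I would record the direct computations $X_i(x_\alpha)=\delta_{i\alpha}$, $Y_i(x_\alpha)=0$, $X_i(y_\alpha)=0$, $Y_i(y_\alpha)=\delta_{i\alpha}$, which give $\Delta_{\mathbb{H}^n} x_\alpha = \Delta_{\mathbb{H}^n} y_\alpha = 0$ and consequently
$$[H,x_\alpha]u_i = -2\,X_\alpha u_i, \qquad [H,y_\alpha]u_i = -2\,Y_\alpha u_i.$$
Because $X_\alpha$ and $Y_\alpha$ are divergence-free with respect to Lebesgue measure on $\mathbb{H}^n$ and $u_i$ vanishes on $\partial\Omega$, integration by parts gives $\langle[H,x_\alpha]u_i,x_\alpha u_i\rangle_{L^2} = \int_\Omega X_\alpha(x_\alpha)\,u_i^2 = 1$, and likewise for $y_\alpha$. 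Summing these over $\alpha$ produces
$$\sum_{\alpha=1}^n \bigl(\langle[H,x_\alpha]u_i,x_\alpha u_i\rangle_{L^2} + \langle[H,y_\alpha]u_i,y_\alpha u_i\rangle_{L^2}\bigr) = 2n,$$
while a parallel calculation yields
$$\sum_{\alpha=1}^n \bigl(\|[H,x_\alpha]u_i\|_{L^2}^2 + \|[H,y_\alpha]u_i\|_{L^2}^2\bigr) = 4\int_\Omega |\nabla_{\mathbb{H}^n} u_i|^2 = 4\lambda_i,$$
the last equality using $\langle-\Delta_{\mathbb{H}^n}u_i,u_i\rangle_{L^2} = \lambda_i$.

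Applying Lemma~\ref{commutation} to each of the $2n$ functions $x_\alpha,y_\alpha$ and summing the resulting inequalities then gives
$$2n\sum_{i=1}^k (\lambda_{k+1}-\lambda_i)^2 \le 4\sum_{i=1}^k (\lambda_{k+1}-\lambda_i)\lambda_i,$$
which is exactly assertion (I) after dividing by $2$. Assertion (II) follows by expanding (I) as a quadratic inequality in $\lambda_{k+1}$,
$$nk\,\lambda_{k+1}^2 - 2(n+1)\lambda_{k+1}\sum_{i=1}^k \lambda_i + (n+2)\sum_{i=1}^k \lambda_i^2 \le 0,$$
and reading off the two roots; this yields the stated upper and lower bounds involving $\sqrt{\tilde D_{nk}}$, precisely as in the passage from (I) to (II)--(III) in the proof of Theorem~\ref{PPW euclidean}.

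The main obstacle is really a bookkeeping point: one must verify that the algebraic framework underlying Lemma~\ref{commutation} applies unchanged to the sub-elliptic operator $-\Delta_{\mathbb{H}^n}$ acting on $S^{1,2}_0(\Omega)$. Once that is done, the remaining work is the routine computation of the two horizontal identities above. Observe that the absence of any mean-curvature or potential contribution in (I) is structural: the coordinate functions $x_\alpha, y_\alpha$ are genuinely harmonic for $\Delta_{\mathbb{H}^n}$, so the analogue of the Euclidean term $\int |h|^2 u_i^2$ vanishes identically.
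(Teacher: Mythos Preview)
Your proposal is correct and follows essentially the same argument as the paper: apply Lemma~\ref{commutation} with $H=-\Delta_{\mathbb{H}^n}$ and $G=x_\alpha,y_\alpha$, use the commutator identities $[H,x_\alpha]u_i=-2X_\alpha u_i$, $[H,y_\alpha]u_i=-2Y_\alpha u_i$ together with the skew-symmetry of $X_\alpha,Y_\alpha$ to obtain $\sum_\alpha(\langle[H,x_\alpha]u_i,x_\alpha u_i\rangle+\langle[H,y_\alpha]u_i,y_\alpha u_i\rangle)=2n$ and $\sum_\alpha(\|[H,x_\alpha]u_i\|^2+\|[H,y_\alpha]u_i\|^2)=4\lambda_i$, and then deduce (II) from (I) via the quadratic formula. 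The only differences are cosmetic (you phrase the integration by parts via ``divergence-free'' rather than ``skew-symmetry'' and record the intermediate identity $\langle[H,x_\alpha]u_i,x_\alpha u_i\rangle=\int X_\alpha(x_\alpha)u_i^2$ directly).
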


\begin{remark}\label{rk4}
Using the Cauchy--Schwarz inequality $(\sum_{i=1}^{k} \lambda_{i})^{2} \leq k \sum_{i=1}^{k} \lambda_{i}^2$, we deduce from Theorem~\ref{PPW Heisenberg} (II) that
$$ \lambda_{k+1} \leq \left(\frac{1}{k}+\frac{2}{nk}\right)\left(\sum_{i=1}^{k} \lambda_{i}\right)$$
which improves a result of Niu and Zhang \cite{NZ}.
\end{remark}
\begin{proof}
The key observation here is that Lemma \ref {commutation} remains valid for $H=L= -\Delta_{\mathbb{H}^{n}}$ and $G=x_{\alpha}$ or $G=y_{\alpha}$.
Thus we have
\begin{eqnarray}
\nonumber{} \sum_{i=1}^{k}\sum_{\alpha=1}^{n}(\lambda_{k+1}-\lambda_{i})^{2} (\langle[L,x_{\alpha}]u_{i},x_{\alpha}u_{i}\rangle_{L^{2}}&+&\langle[L,y_{\alpha}]u_{i},y_{\alpha}u_{i}\rangle_{L^{2}}) \le \\ \nonumber \sum_{i=1}^{k}\sum_{\alpha=1}^{n}(\lambda_{k+1}-\lambda_{i})(\left\|[L,x_{\alpha}]u_{i}\right\|_{L^2}^{2}&+&\left\|[L,y_{\alpha}]u_{i}\right\|_{L^2}^{2})\ \ \ \ \ \ \ \ (5.2)
\end{eqnarray}
with
\begin{center}
$ \left[L,x_{\alpha}\right]u_{i}=-2 X_{\alpha}(u_{i})$  and  $ \left[L,y_{\alpha}\right]u_{i}=-2 Y_{\alpha}(u_{i}).$
\end{center}
Thus, $$ \sum_{\alpha=1}^{n}\left\|[L,x_{\alpha}]u_{i}\right\|_{L^2}^{2}+\left\|[L,y_{\alpha}]u_{i}\right\|_{L^2}^{2}= 4\int_{\Omega} |\nabla_{\mathbb{H}^{n}}u_{i}|^{2}=4 \lambda_{i}.$$
Now, using the skew-symmetry of $X_{\alpha}$ (resp. $Y_{\alpha})$, we have
$$\int_{\Omega} X_{\alpha}(u_{i})\,x_{\alpha}u_{i}= -\int_{\Omega} u_{i} X_{\alpha}(x_{\alpha}u_{i})= -\int_{\Omega} u_{i}^{2} -\int_{\Omega} X_{\alpha}(u_{i}) x_{\alpha}u_{i}$$
and the same identity holds with $y_{\alpha}$ and $Y_{\alpha}$.  Therefore,
$$ -2 \int_{\Omega} X_{\alpha}(u_{i}) x_{\alpha}u_{i}= -2 \int_{\Omega} Y_{\alpha}(u_{i})y_{\alpha}u_{i}= \int_{\Omega} u_{i}^{2}=1.$$
We put these identities in (5.2) and obtain the first assertion of Theorem~\ref {PPW Heisenberg}.
The second assertion follows as in the 
proof of Theorem~\ref {PPW euclidean}.~\end{proof}

\subsection*{Acknowledgments}

This work was partially supported by US NSF grant DMS-0204059, and was done in large measure while E.~H. was a visiting professor at the Universit\'e Fran\c cois Rabelais.  We also wish to
thank Mark Ashbaugh and Lotfi Hermi for remarks and references.


\end{document}